\newtheorem{theorem}{Theorem}
\newtheorem*{theorem*}{Theorem}
\newtheorem*{theoremY*}{Theorem Y}
\newtheorem*{theoremAB*}{Theorem AB}
\newtheorem*{linearformsmtp*}{Mass transference principle for linear forms}
\newtheorem{corollary}{Corollary}
\newtheorem*{corollary*}{Corollary}
\newtheorem{proposition}{Proposition}
\newtheorem{lemma}{Lemma}
\newtheorem*{claim*}{Claim}
\theoremstyle{definition}
\newtheorem{definition}{Definition}
\theoremstyle{remark}
\newtheorem{remark}{Remark}
\newtheorem*{remark*}{Remark}
\newcommand{\bp}{\mathbf{p}}
\renewcommand{\Bbb}[1]{\mathbb{#1}}
\newcommand{\N}{{\Bbb N}}         
\newcommand{\Q}{{\Bbb Q}}         
\newcommand{\R}{{\Bbb R}}        
\newcommand{\Z}{{\Bbb Z}}         
\newcommand{\cH}{{\mathcal H}}
\newcommand{\cK}{\mathcal{K}}
\newcommand{\mb}{\mathbf{b}}
\newcommand{\Bad}{\mathbf{Bad}}
\DeclareMathOperator{\dimh}{\dim_H}
\newcommand{\bq}{\textbf{q}}
\numberwithin{equation}{section}
\title{Weighted Twisted Inhomogeneous Diophantine Approximation}
\author[Mumtaz Hussain]{Mumtaz Hussain}
\address{Mumtaz Hussain,  Department of Mathematical and Physical Sciences,  La Trobe University, Bendigo 3552, Australia. }
\email{m.hussain@latrobe.edu.au}
\author{ Benjamin Ward}
\address{Ben Ward,  Department of Mathematical and Physical Sciences,  La Trobe University, Bendigo 3552, Australia. }
\email{Ben.Ward@latrobe.edu.au}
\date{\today}
\date{\today}
\begin{document}
\frenchspacing

\begin{abstract}
 We prove a multidimensional weighted analogue of the well-known theorem of Kurzweil (1955) in the metric theory of inhomogeneous Diophantine approximation. Let $\sum_{i=1}^{m}\alpha_{i}=~m$ and $|\cdot|_{\alpha}=\max_{1\leq i \leq m}|\cdot|^{1/ \alpha_{i}}$. Given an $n$-tuple of monotonically decreasing univariable functions $\Psi=~(\psi_{1},\dots,\psi_{n})$ with $\psi_{i}:\R_{+} \to \R_{+}$ such that each $\psi_i(r)\to 0$ as $r\to \infty$ and fixed $A\in\R^{n\times m}$ define
 \begin{equation*}
     W_{A}(\Psi):=\left\{ \mb \in [0,1]^{n}: \begin{array}{l} |A_{i}\cdot\bq -b_{i}-p_{i}|<\psi_{i}(|\bq|_{\alpha}) \quad (1 \leq i \leq n), \\[1ex]
     \text{for infinitely many } (\bp,\bq)\in\Z^{n}\times (\Z^{m}\backslash\{0\})\end{array}
     \right\}. 
 \end{equation*} 
 We prove that the set $ W_{A}(\Psi)$ has zero-full Lebesgue measure under convergent-divergent sum conditions with some mild assumptions on $A$ and the approximating functions $\Psi$. We also prove the Hausdorff dimension results for this set.  Along with some geometric arguments, the main ingredients are the weighted ubiquity and weighted mass transference principle introduced recently by Kleinbock \& Wang [Adv. Math. 428 (2023), Paper No. 109154],  and Wang \& Wu [Math. Ann. 381 (2021), no. 1-2, 243–317] respectively.

\end{abstract}

\maketitle

\section{Introduction}

For a fixed $\xi\in\R$ consider the sequence $(\{\xi q\})_{q\in\N}$, where $\{x\}$ denotes the fractional part of $x\in\R$. When $\xi \in \Q$ the sequence is periodic, but for $\xi\in \R\backslash\Q$ the sequence is dense on the unit interval. In 1901 Minkowski proved that for any irrational $\xi\in\R$ the inequality
\begin{equation*} \label{mink-statement}
    |\xi q - b -p|<\frac{1}{4q}
\end{equation*}
has infinitely many solutions $(q,p)\in\N\times \Z$ for any $b \not \in \Z + \xi\Z$ \cite{Minkowski1900}. The constant $\frac{1}{4}$ was later made optimal by Khintchine \cite{Khintchine1946}. 
Note that rather than the classical setting of Diophantine approximation where one approximates the space $[0,1]$ by rational points $\tfrac{p}{q}\in\Q$, here we are approximating the space $[0,1]$ by the points $(\{\xi q\})_{q\in\N}$, hence the name ``twisted'' inhomogeneous Diophantine approximation. Naturally, this requires knowledge of the distribution of the points $( \{\xi q\} )_{q\in\N}$, and so the choice of $\xi\in\R$ heavily affects that rate of approximation. \par 
A natural question to ask is how many $b\in\R$ satisfy the above inequality when the right hand side is replaced by some general decreasing function $\psi: \N \to \R_{+}$. That is, how large is the set
\begin{equation*}
    W_{\xi}(\psi):=\left\{ b\in [0,1] : |\xi q -b -p|<\psi(q) \,\,\text{ for infinitely many}\,\, (q,p)\in\N\times\Z \right\}.
\end{equation*}

In 1955, Kurzweil made the following contribution to answering the above question. Recall that $\xi$ is badly approximable if there exists a constant $c(\xi)>0$ such that
\begin{equation*}
    |\xi q -p| \geq \frac{c(\xi)}{q} \quad \text{ for all } (q,p)\in\N\times \Z.
\end{equation*}
Throughout, let $\lambda_{d}(A)$ denotes the $d$-dimensional Lebesgue measure of a set $A\subseteq\R^d$.
\begin{theorem*}[{\cite{Kur55}}]\label{kurzweil}
    Let $\psi:\N\to \R_{+}$ be a non-increasing function and $\xi\in\R\backslash\Q$. Then, $\lambda_{1}(W_{\xi}(\psi))\in\{0,1\}$. Furthermore, for $\xi$ badly approximable
    \begin{equation*}
        \lambda_{1}(W_{\xi}(\psi))=\begin{cases}
            0 \quad \text{\rm if } \quad \sum\limits_{r\in\N} \psi(r)<\infty\, ,\\[2ex]
            1 \quad \text{\rm if } \quad \sum\limits_{r\in\N} \psi(r)=\infty\, .
        \end{cases}
    \end{equation*}
\end{theorem*}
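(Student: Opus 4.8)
The plan is to separate the zero--one dichotomy (valid for every irrational $\xi$) from the convergence/divergence criterion (which needs $\xi$ badly approximable), and to treat each by the standard $\limsup$-set toolkit. Write $W:=W_\xi(\psi)\subseteq\R/\Z$ and note that, since for each $q$ only finitely many $p$ are admissible, $W=\limsup_{q\to\infty}E_q$ where $E_q:=\{b:\|q\xi-b\|<\psi(q)\}$ is an arc of length $\min(1,2\psi(q))$ about $\{q\xi\}$. For the dichotomy I would check that $W$ is invariant modulo a null set under the rotation $R_{-\xi}\colon b\mapsto b-\xi$: if $(q,p)$ witnesses $b\in W$ with $q\ge 2$ then $|(q-1)\xi-(b-\xi)-p|=|q\xi-b-p|<\psi(q)\le\psi(q-1)$, so $(q-1,p)$ witnesses $b-\xi\in W$; since infinitely many witnesses of $b$ have $q\ge2$, this gives $R_{-\xi}(W)\subseteq W$, and as $R_{-\xi}$ is a measure-preserving bijection we get $\lambda_1(W\triangle R_{-\xi}(W))=0$. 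Irrationality of $\xi$ makes $R_{-\xi}=R_\xi^{-1}$ ergodic for $\lambda_1$, hence $\lambda_1(W)\in\{0,1\}$.

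The convergence case needs nothing arithmetic about $\xi$: since $\psi\to0$ here, $\sum_q\lambda_1(E_q)=2\sum_q\psi(q)+O(1)<\infty$, and the convergence Borel--Cantelli lemma yields $\lambda_1(W)=0$.

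The content is the divergence case, where by the dichotomy it is enough to show $\lambda_1(W)>0$; we may also assume $\psi(r)\to0$, else minimality of the rotation already forces $W=\R/\Z$. The key point is that for a badly approximable $\xi$ the orbit $\{\{q\xi\}:q\le N\}$ is, uniformly in $N$, a $\asymp 1/N$-net of the circle: if $q_i\le N<q_{i+1}$ are continued-fraction denominators then the three-distance theorem puts every gap between consecutive points into $[\|q_i\xi\|,\|q_{i-1}\xi\|]$, and bounded partial quotients give $q_{i+1}\asymp q_i\asymp N$, whence $\|q_i\xi\|\asymp\|q_{i-1}\xi\|\asymp 1/N$. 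Thus the points are $\gg 1/N$-separated and the balls $B(\{q\xi\},c_1/N)$, $q\le N$, cover $\R/\Z$ with bounded multiplicity --- i.e. $(\{q\xi\})_{q\in\N}$ is a locally ubiquitous system with ubiquity function $\rho(N)=c_1/N$ (the net property localizes to any fixed subinterval). Feeding this into the classical ubiquity theorem of Beresnevich--Dickinson--Velani (the unweighted $n=m=1$ prototype of the weighted ubiquity invoked later in the paper), the set $\limsup_q B(\{q\xi\},\psi(q))$ has full measure as soon as $\sum_k \psi(2^k)/\rho(2^k)=c_1^{-1}\sum_k 2^k\psi(2^k)$ diverges; and Cauchy condensation --- legitimate because $\psi$ is non-increasing --- shows this happens precisely when $\sum_r\psi(r)=\infty$. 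Hence $\lambda_1(W)=1$.

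I expect the ubiquity input to be the main obstacle: one must upgrade the qualitative equidistribution of $(\{q\xi\})$ to the quantitative, scale-uniform net property, and this is exactly where the badly approximable hypothesis is used --- for a general irrational $\xi$ the gaps can be very far from $1/N$ and the divergence criterion genuinely fails. The remaining step, passing from local ubiquity and the divergence of $\sum_k 2^k\psi(2^k)$ to full measure of the $\limsup$ set, is the quasi-independence-across-scales argument packaged inside the ubiquity theorem and can be quoted; the only bookkeeping there is to match $\sum_r\psi(r)$, its condensed form, and the block-wise local masses $\min(1,2^k\psi(2^k))$, and (if one prefers to avoid local ubiquity) to note that whole-circle ubiquity plus the dichotomy already delivers $\lambda_1(W)=1$ from $\lambda_1(W)>0$.
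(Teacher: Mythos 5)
The paper does not prove this theorem; it is quoted from \cite{Kur55} as motivating background. So there is nothing to compare directly, but your argument is correct and, for its core, is precisely the one-dimensional, unweighted prototype of the ubiquity strategy the paper runs to prove Theorem~\ref{twisted statement} and Corollary~\ref{bad_corollary}: bounded partial quotients plus the three-distance theorem give a scale-uniform $\asymp 1/N$ net of resonant points $\{q\xi\}$, this yields local ubiquity with $\rho(N)\asymp 1/N$, and Cauchy condensation converts the divergence of $\sum_k 2^k\psi(2^k)$ into that of $\sum_r\psi(r)$; indeed, specializing the paper's Corollary~\ref{bad_corollary} to $n=m=1$, $v=\alpha=1$ recovers your convergence/divergence criterion for badly approximable $\xi$ under the (WLOG) normalization $\psi(q)\ll q^{-1}$. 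What you supply that the paper does not address anywhere is the zero-one law for an arbitrary irrational $\xi$: the re-indexing $(q,p)\mapsto(q-1,p)$, legitimate by monotonicity of $\psi$, gives $R_{-\xi}W\subseteq W$, and then measure-preservation plus ergodicity of the irrational rotation forces $\lambda_1(W)\in\{0,1\}$ --- a genuinely different input from anything in the paper's toolkit. One small point to make explicit before invoking ubiquity: the hypothesis $\psi\le\rho$ needs a case split. If $\psi(q)>c_1/q$ for infinitely many $q$, then each such $q$ already furnishes, via the $c_1/q$-net property, a witness $q'\le q$ for every $b$, so $W=\R/\Z$; otherwise $\psi(q)\le c_1/q$ eventually and ubiquity applies. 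Your closing remark that global ubiquity together with the dichotomy already yields full measure from positive measure is a neat shortcut that avoids the localization to subintervals entirely.
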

Heuristically, it makes sense to consider $\xi$ badly approximable. Note that, by the Three Distance Theorem (see for example \cite{Ravenstein1988}), the sequence $(\{\xi q\})_{q\in\N}$ does not cluster in any particular region of $[0,1]$, and so are reasonably good at approximating the unit interval. Theorem~\ref{kurzweil} has since been refined to apply to more general $\xi \in \R$. In particular, Fuchs and Kim improved on the statement by considering the principle convergents of $\xi$, see \cite[Theorem 1.2]{FuchsKim} and \cite{Simmons15} for further details and contributions to this theory. \par
 When the Lebesgue measure of the set is null, for example, all functions of the form $\psi_{\tau}(q)=q^{-\tau}$ with $\tau>1$ have $\lambda_{1}(W_{\xi}(\psi_{\tau}))=0$, one can ask for a refined statement to distinguish between the null sets. In this regard, Bugeaud \cite[Theorem 1]{Bugeaud2003}, and Schmeling and Troubetzkoy \cite[Theorem 3.2]{SchmTrot03} independently proved the following result.
\begin{theorem*}[{\cite{Bugeaud2003,SchmTrot03}}]
For any $\xi\in\R\backslash\Q$ and $\tau>1$
\begin{equation*}
    \dimh W_{\xi}(\psi_{\tau})=\frac{1}{\tau},
\end{equation*}
where $\dimh$ denotes the Hausdorff dimension.
\end{theorem*}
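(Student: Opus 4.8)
The plan is to establish the Hausdorff dimension of $W_\xi(\psi_\tau)$ by the standard two-sided strategy: an upper bound from a natural cover, and a matching lower bound from a mass-transference-type argument. For the upper bound, I would observe that $W_\xi(\psi_\tau)$ is a $\limsup$ set: writing $b \in W_\xi(\psi_\tau)$ iff $b$ lies in infinitely many of the intervals $I(q) = \{b \in [0,1] : \|\xi q - b\| < q^{-\tau}\}$ (with $\|\cdot\|$ the distance to the nearest integer), so $W_\xi(\psi_\tau) \subseteq \bigcap_{Q} \bigcup_{q \ge Q} I(q)$. Each $I(q)$ has length $2q^{-\tau}$, so for any $Q$ the tails $\bigcup_{q\ge Q} I(q)$ form a cover, and $\sum_{q \ge Q} (2 q^{-\tau})^s$ converges precisely when $s > 1/\tau$. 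Letting $Q \to \infty$ gives $\mathcal H^s(W_\xi(\psi_\tau)) = 0$ for all $s > 1/\tau$, hence $\dimh W_\xi(\psi_\tau) \le 1/\tau$. This part is routine and requires nothing about $\xi$ beyond irrationality.

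For the lower bound, the key input is a ubiquity/distribution statement for the twisted sequence $(\{\xi q\})_{q}$: namely, that the points $\{\xi q\}$ for $q \le N$ are spread through $[0,1]$ densely enough that the intervals of radius, say, $\psi_0(q)$ for a slowly decaying $\psi_0$ cover $[0,1]$ up to a set of small measure (a full-measure, or positive-proportion, local ubiquity statement). For irrational $\xi$ this follows from the Three Distance Theorem: among $q \le N$ the gaps between consecutive points $\{\xi q\}$ take at most three values, all of order $1/N$, so taking a suitable scale one gets that $\bigcup_{q \le N} I_{c/N}(q)$ covers a definite proportion of $[0,1]$. Feeding this ubiquitous system into the Mass Transference Principle (or its ubiquity-based refinements, e.g. Beresnevich–Velani), one transfers the covering at scale $1/N$ to a covering at the shrunk scale $N^{-\tau}$, obtaining the lower bound $\dimh W_\xi(\psi_\tau) \ge 1/\tau$. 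Combined with the upper bound this yields $\dimh W_\xi(\psi_\tau) = 1/\tau$.

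The main obstacle, and the reason the theorem is nontrivial for \emph{all} irrational $\xi$ rather than just badly approximable ones, is controlling the ubiquity at the right scale uniformly in $\xi$. When $\xi$ is well approximable, the points $\{\xi q\}$ for $q$ up to a given bound can be very unevenly distributed at certain scales: around a good rational approximation $p/Q$, the orbit clusters near the lattice $\tfrac{1}{Q}\Z$ before filling in the gaps, so the naive ``balls of radius $1/N$ cover everything'' heuristic fails along a sparse sequence of $N$. The fix is to work along the denominators of the convergents of $\xi$ (or suitable partial sums), where the Three Distance Theorem gives precise control: if $q_k$ is the $k$-th convergent denominator, then $\{\xi q\}$ for $q < q_{k+1}$ has gap structure governed by $q_k$ and $q_{k+1}$, and one chooses the $\limsup$ subsequence accordingly. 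One then has to check that restricting to this subsequence of scales does not lose any Hausdorff dimension in the transference step — i.e. that the gaps $q_{k+1}/q_k$, however large, only affect lower-order terms once one optimizes the exponent, since the MTP lower bound depends only on the ratio $\log(\text{shrunk radius})/\log(\text{original radius}) \to 1/\tau$ regardless of how the scales are spaced. Making this last point rigorous — that the dimension is insensitive to the lacunarity of the convergent denominators — is the technical heart of the argument.
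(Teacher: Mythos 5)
The paper does not prove this statement: it is quoted from Bugeaud~\cite{Bugeaud2003} and Schmeling--Troubetzkoy~\cite{SchmTrot03} as background for the authors' own higher-dimensional weighted results, so there is no in-paper proof to measure your attempt against. Your outline is nonetheless a legitimate modern route to the result, and it in fact mirrors the ubiquity-plus-mass-transference template that the paper uses for its \emph{new} theorems (Theorem~\ref{twisted statement} onward). The cited originals, however, predate the mass transference principle (2006) and argue via explicit Cantor constructions and Frostman-type lower bounds, so your approach is genuinely different from the proofs actually being referenced.

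On the substance: your upper-bound cover is correct and uniform in $\xi$. For the lower bound you correctly locate the difficulty (ubiquity at scale $1/N$ can fail at arbitrary $N$ when $\xi$ is well approximable) and the fix (pass to scales near the convergent denominators $q_k$, where the Three Distance Theorem forces all gaps to be $\ll 1/q_k$). Your claim that the lacunarity of $q_k$ costs no dimension is also right and can be made precise: with $\rho(r)\asymp r^{-1}$, $\psi_\tau(r)=r^{-\tau}$, and $u_k=q_{2k}$ (thinned so that $\rho$ is $c$-regular, using $q_{k+2}>2q_k$), the divergence condition of the ubiquity/MTP theorem at exponent $s=1/\tau$ becomes $\sum_k q_{2k}^{\,1-s\tau}=\sum_k 1=\infty$, independent of the growth rate of $q_k$. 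The two details you elide --- that the ubiquity must hold with a uniform constant inside \emph{every} ball, not just globally, and that the low-frequency $q$ in each block must be discarded without losing a positive proportion --- are both routine from the Three Distance Theorem, much as the paper handles them in \S\ref{twisted ubiquity and proof}. So the proposal is essentially sound as a modern proof sketch, but it is not the proof of the sources being cited.
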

We should remark that one of the first results in this direction was proven by Bernik and Dodson \cite[p.105]{BernikDodson} who proved the above result for $\lambda_{1}$-almost all $\xi\in[0,1]$. Furthermore, the above theorem has since been generalised to a range of more general approximation functions $\psi$ \cite[Theorem 3]{FanWu06} and a restricted set of $\xi\in\R$, where $\tau$ in the dimension is replaced by the lower order at infinity of the function $\psi$. Perhaps more importantly, it was also shown that there exists $\xi\in\R$ where the expected dimension result is false \cite[Theorem2]{FanWu06}. \par

\subsection{Higher dimensional twisted inhomogeneous approximation}

The above setup can be readily extended to higher dimensions. Throughout suppose $v=(v_{1},\dots , v_{n})\in~\R^{n}_{+}$ and $\alpha=(\alpha_{1},\dots, \alpha_{m})\in\R^{m}_{+}$ are vectors satisfying
 \begin{equation*}
     \sum_{i=1}^{n}v_{i}=n\, , \quad \sum_{i=1}^{m}\alpha_{i}=m,
 \end{equation*}
 and let
 \begin{equation*}
     |\cdot|_{v}=\max_{1\leq i \leq n}|\cdot|^{1/ v_{i}} \, , \quad |\cdot|_{\alpha}=\max_{1\leq i \leq m}|\cdot|^{1/ \alpha_{i}} \, .
 \end{equation*}
 Let $\R^{n\times m}$ denote the set of $n\times m$ matrices with real number entries and fix some $A\in\R^{n\times m}$. Given an $n$-tuple of monotonic decreasing functions $\Psi=(\psi_{1},\dots,\psi_{n})$ with $\psi_{i}:\R_{+} \to \R_{+}$, we say $\mb=(b_{1},\dots,b_{n}) \in [0,1]^{n}$ is \textit{$\Psi$-approximable for $A$} if there exists infinitely many $(\bq,\bp)=(q_{1},\dots, q_{m},p_{1},\dots,p_{n})\in(\Z^{m}\backslash\{0\})\times \Z^{n}$ solving
 \begin{equation} \label{deftwisted}
     |A_{i}\cdot\bq -b_{i}-p_{i}|<\psi_{i}(|\bq|_{\alpha}) \quad (1 \leq i \leq n),
 \end{equation}
 where $A_{i}$ denotes the $i$th row of $A$. Denote by $W_{A}(\Psi)$ the set of such $\Psi$-approximable vectors for $A$, that is
 \begin{equation*}
     W_{A}(\Psi):=\left\{ \mb \in [0,1]^{n} : \text{ \eqref{deftwisted} is solved for infinitely many } (\bq,\bp)\in(\Z^{m}\backslash\{0\})\times \Z^{n} \right\}. 
 \end{equation*} 

 One should note that the one dimensional Lebesgue measure results presented above, that is the theorems of Kurzweil, Fuchs and Kim, rely in parts on the theory of continued fractions, and so higher dimensional analogues do not readily follow. Similarly, the dimension theory result of Bugeaud, Schmeling, and Troubetzkoy, uses the Three Distance Theorem, again a result strongest in the one dimensional setting. \par 
 In generalising the technique presented in \cite{Kim23} to the weighted ubiquitous setting we are able to prove higher dimensional weighted analogues of the classical results. In order to state our results we need the following definitions.

We say, $A\in\R^{n\times m}$ is $(v,\alpha)$-singular if for any $\varepsilon>0$ and for all sufficiently large $N\geq 1$, there exists $(\bq,\bp)\in\Z^{m+n}$ solving the inequalities
  \begin{equation} \label{system1}
      \begin{cases}
          |A_{i}\cdot\bq -p_{i}|<\varepsilon N^{-v_{i}\frac{m}{n}} \quad (1\leq i \leq n), \\
          0<|\bq|_{\alpha}<N\, .
      \end{cases}
  \end{equation}
  Let $Sing_{\alpha}(v)$ denote the set of all $(v,\alpha)$-singular matrices. That is,
  \begin{equation*}
      Sing_{\alpha}(v)=\left\{ A \in \R^{n\times m}: \lim_{N\to \infty}\left( N \min_{0<|\bq|_{\alpha}<N} \max_{1\leq i \leq n} |A_{i}\cdot\bq -p_{i}|^{\frac{n}{mv_{i}}}\right) =0 \right\}.
  \end{equation*}

We also define the set of $(v,\alpha)$-badly approximable points as
 \begin{equation*}
     \Bad_{\alpha}(v):=\left\{A \in \R^{n\times m}: \liminf_{|\bq|_{\alpha}\to \infty} \left( \max_{1\leq i \leq n} |\bq|_{\alpha}|A_{i}\cdot \bq -p_{i}|^{\frac{n}{m v_{i}}} \right)>0 \right\}.
 \end{equation*}
  
Lastly, define the set
\begin{equation*}
    L_{\alpha}(v,A,\varepsilon):=\left\{\ell\in \N : \begin{cases} |A_{i}\cdot \bq - p_{i}|<\varepsilon 2^{-\ell v_{i}\frac{m}{n}} \quad (1\leq i \leq n), \\
    0<|\bq|_{\alpha}<2^{\ell}
    \end{cases} \text{ has no solution } (\bq,\bp)\in\Z^{m+n} \right\}.
\end{equation*}
Given the above definitions, we are able to state our results.
\begin{theorem} \label{twisted statement}
    Let $A\in Sing_{\alpha}(v)^{c}$ and let $\Psi=(\psi_{1},\dots, \psi_{n})$ be an $n$-tuple of monotonic approximation functions with each
    \begin{equation*}
        \psi_{i}(r)\ll r^{-v_{i}\frac{m}{n}} \quad (1\leq i \leq n)
    \end{equation*}
    for all $r\in\R_{+}$ with the implied constants independent of $r$. Then
    \begin{equation*}
        \lambda_{n}\left(W_{A}(\Psi) \right)=1 \quad \text{\rm if } \sum_{\ell \in L_{\alpha}(v,A,\varepsilon)} 2^{m\ell }\prod_{i=1}^{n}\psi_{i}\left(2^{\ell}\right)=\infty.
    \end{equation*}
\end{theorem}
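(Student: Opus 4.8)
The plan is to realise $W_A(\Psi)$ as a $\limsup$ set of weighted boxes centred at the torus points $(\{A_1\cdot\bq\},\dots,\{A_n\cdot\bq\})$ and to prove that, at the scales recorded by $L_\alpha(v,A,\varepsilon)$, these points form a weighted ubiquitous system in the sense of Kleinbock and Wang, whose divergence criterion then delivers full measure; this is the natural adaptation to the weighted setting of the strategy in \cite{Kim23}. Write $\rho_\ell:=2^{-\ell m/n}$ for the weighted Dirichlet scale at level $\ell$. Using the monotonicity of the $\psi_i$ together with the admissibility bound $\psi_i(r)\ll r^{-v_i m/n}$, one has
\[
W_A(\Psi)\;\supseteq\;\limsup_{\ell\in L_\alpha(v,A,\varepsilon)}\ \bigcup_{0<|\bq|_\alpha<2^{\ell}}\ \prod_{i=1}^{n}B\!\big(\{A_i\cdot\bq\},\,\psi_i(2^{\ell})\big),
\]
where $B(x,r)$ is the interval of radius $r$ on $\R/\Z$ and the product is the associated box, whose $i$-th side is $\ll\rho_\ell^{v_i}$. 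Since $\#\{\bq:0<|\bq|_\alpha<2^\ell\}\asymp 2^{m\ell}=\rho_\ell^{-n}$, the level-$\ell$ union has Lebesgue measure of order $2^{m\ell}\prod_i\psi_i(2^\ell)$ once one bounds the overlap multiplicity of the boxes.

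Everything hinges on a lattice dictionary. Attach to $A$ the unimodular lattice $\Lambda_A:=\{(\bq,\,A\bq-\bp):(\bp,\bq)\in\Z^n\times\Z^m\}\subset\R^{m+n}$ and let $(g_\ell)$ be the diagonal one-parameter family that contracts the $\bq$-coordinates at rates proportional to the $\alpha_j$ and dilates the error coordinates at rates proportional to the $v_i$, normalised so that a vector $(\bq,A\bq-\bp)$ of $\Lambda_A$ lies in the unit supremum-norm cube after applying $g_\ell$ exactly when $|\bq|_\alpha\lesssim 2^\ell$ and $|A_i\cdot\bq-p_i|\lesssim\rho_\ell^{v_i}$ for every $i$. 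By the very definition of $L_\alpha(v,A,\varepsilon)$, if $\ell\in L_\alpha(v,A,\varepsilon)$ then $g_\ell\Lambda_A$ has no nonzero vector of supremum norm smaller than $\varepsilon$; and since $g_{\ell'}g_\ell^{-1}$ distorts norms by at most a bounded factor whenever $|\ell'-\ell|=O(1)$, the same bound (with a smaller constant $c(\varepsilon)>0$) survives under the $O(1)$ adjustments needed to handle the quasi-norm $|\cdot|_\alpha$.

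From this lower bound on the length of the shortest nonzero vector of $g_\ell\Lambda_A$ one extracts two statements for each $\ell\in L_\alpha(v,A,\varepsilon)$. \textbf{Bounded overlap.} If two points $\{A\cdot\bq\},\{A\cdot\bq'\}$ with $0<|\bq|_\alpha,|\bq'|_\alpha<2^\ell$ satisfied $\|A_i\cdot(\bq-\bq')\|\lesssim\rho_\ell^{v_i}$ for all $i$, then $(\bq-\bq',A(\bq-\bq')-\bp)$ would be a nonzero vector of $g_\ell\Lambda_A$ of bounded supremum norm, and the number of such vectors is $O_\varepsilon(1)$; hence every $\mb$ lies in $O_\varepsilon(1)$ of the level-$\ell$ boxes and the level-$\ell$ union has measure $\asymp_\varepsilon 2^{m\ell}\prod_i\psi_i(2^\ell)$. \textbf{Ubiquity.} Minkowski's second theorem plus the standard transference bound controlling the covering radius by the successive minima turn the lower bound on the shortest vector into an upper bound $C(\varepsilon)$ for the covering radius of $g_\ell\Lambda_A$; applied to the coset of $g_\ell\Lambda_A$ determined by a target $\mb$, this shows that every $\mb\in[0,1]^n$ admits $\bq$ with $0<|\bq|_\alpha\lesssim_\varepsilon 2^\ell$ and $\|A_i\cdot\bq-b_i\|\le C(\varepsilon)\rho_\ell^{v_i}$ for $1\le i\le n$. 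Thus the points $(\{A\cdot\bq\})_{\bq}$, graded by $|\bq|_\alpha$, form a weighted ubiquitous system with ubiquity function comparable to $\rho_\ell$ along the subsequence $L_\alpha(v,A,\varepsilon)$; the hypothesis $A\notin Sing_{\alpha}(v)$ is precisely what keeps this subsequence infinite.

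Feeding this weighted ubiquitous system into the Kleinbock--Wang weighted ubiquity theorem, whose divergence half guarantees that the $\limsup$ of the weighted $\Psi$-neighbourhoods has full Lebesgue measure as soon as $\sum_\ell\rho_\ell^{-n}\prod_{i=1}^n\psi_i(2^\ell)=\infty$, and observing that this series is exactly $\sum_{\ell\in L_\alpha(v,A,\varepsilon)}2^{m\ell}\prod_{i=1}^n\psi_i(2^\ell)$, gives $\lambda_n(W_A(\Psi))=1$. I expect the ubiquity step to be the main obstacle: one must convert the scale-by-scale ``no short vector'' information into a covering statement that is at once \emph{uniform in the target $\mb$} and of the correct \emph{non-isotropic shape}, which means running the Minkowski/transference argument entirely in the $g_\ell$-rescaled coordinates and controlling how the constants degrade as $\varepsilon\to 0$ and under the $O(1)$ shifts forced by the quasi-norm $|\cdot|_\alpha$. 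The remaining, more routine, task is to check that the level structure $\{0<|\bq|_\alpha<2^\ell\}_{\ell\in L_\alpha(v,A,\varepsilon)}$ meets the exact local-ubiquity and quasi-independence hypotheses of the Kleinbock--Wang framework and that restricting the index set to $L_\alpha(v,A,\varepsilon)$ does not weaken their divergence conclusion.
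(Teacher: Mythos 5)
Your high-level plan — view $W_A(\Psi)$ as a $\limsup$ of weighted boxes centred at $\{A\bq\}$, prove a weighted local ubiquity statement along the scales in $L_\alpha(v,A,\varepsilon)$, then invoke Kleinbock--Wang — is exactly the paper's strategy, and your covering step (passing from the ``no short vector'' information at $\ell\in L_\alpha(v,A,\varepsilon)$ to an inhomogeneous covering via Minkowski's second theorem and a covering-radius transference) is a legitimate alternative to the paper's route via Cassels' transference theorem (their Lemma~\ref{inhomogeneous transference} and Corollary~\ref{twisted_dirichlet}); both produce the needed weighted Dirichlet-type statement on the subsequence where $A$ is not singular.

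The genuine gap is in the step you dismiss as routine: verifying the local ubiquity hypothesis over the shells $J_k=\{\bq : l_k\le|\bq|_\alpha\le u_k\}$. Kleinbock--Wang needs, for every ball $B$ and all large $k$, a lower bound $\lambda_n\bigl(B\cap\bigcup_{\bq\in J_k}\Delta(R_\bq,\rho(u_k))\bigr)\ge c\,\lambda_n(B)$ with $c>0$ independent of $B$. The covering statement alone only covers $B$ using \emph{all} $\bq$ with $|\bq|_\alpha<u_k$; one must then show the contribution from $|\bq|_\alpha<l_k=c_3u_k$ has measure at most, say, $\tfrac12\lambda_n(B)$. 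That requires an upper bound of the form $\#\{\bq:\{A\bq\}\in 2B,\ |\bq|_\alpha\le l_k\}\lesssim l_k^m\,\lambda_n(B)$, i.e.\ a \emph{local equidistribution} count. The paper obtains it from Kim's Weyl-type equidistribution result (their Lemma~\ref{weyl kim}, a weighted version of \cite[Prop.~3.5]{Kim23}), which rests on $A\notin Sing_\alpha(v)$ forcing $A\bq\ (\mathrm{mod}\ 1)$ to equidistribute. Your bounded-overlap observation does not substitute: separation of the boxes at scale $\rho(u_k)$ only yields the packing bound $\#\{\bq:\{A\bq\}\in 2B,|\bq|_\alpha\le l_k\}\lesssim\lambda_n(B)\big/\prod_i\rho_i(u_k)\asymp u_k^m\,\lambda_n(B)$, which is larger than the equidistribution count by the factor $c_3^{-m}$ and therefore gives a contribution $\gtrsim\lambda_n(B)$ that \emph{cannot} be made small by shrinking $c_3$. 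Without the equidistribution input (or a genuine substitute), the local ubiquity constant cannot be established, and the Kleinbock--Wang machine cannot be started.

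Two smaller points: you should also check the $c$-regularity condition of Theorem~\ref{KW ambient measure}, which the paper verifies by noting $\rho_i(u_{k+1})\le 2^{-\frac{m}{n}\min_i v_i}\rho_i(u_k)$; and your $\limsup$ inclusion should explicitly exclude $\bq$-repetitions (equivalently, restrict to the shells $J_k$) so that membership in the $\limsup$ really produces infinitely many distinct solutions $(\bq,\bp)$.
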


\begin{corollary} \label{bad_corollary}
     Let $A\in Bad_{\alpha}(v)$ and let $\Psi=(\psi_{1},\dots, \psi_{n})$ be an $n$-tuple of monotonic approximation functions with each
    \begin{equation*}
        \psi_{i}(r)\ll r^{-v_{i}\frac{m}{n}} \quad (1\leq i \leq n)
    \end{equation*}
    for all $r\in\R_{+}$ with the implied constants independent of $r$. Then
    \begin{equation*}
        \lambda_{n}\left(W_{A}(\Psi) \right)=\begin{cases}
        0 \quad \text{\rm if } \sum\limits_{r\in\N} r^{m-1}\prod_{i=1}^{n}\psi_{i}\left(r\right)<\infty \, , \\[2ex]
            1 \quad \text{\rm if } \sum\limits_{r\in\N} r^{m-1}\prod_{i=1}^{n}\psi_{i}\left(r\right)=\infty\, .
        \end{cases}
    \end{equation*}
\end{corollary}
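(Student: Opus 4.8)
For Corollary~\ref{bad_corollary} the plan is to treat the divergence and convergence halves separately, deducing the divergence half from Theorem~\ref{twisted statement} and proving the convergence half directly by the convergence Borel--Cantelli lemma (which in fact needs neither $A\in\Bad_\alpha(v)$ nor the decay hypothesis on $\Psi$). For the divergence half there are two things to verify before Theorem~\ref{twisted statement} can be applied: first, that $\Bad_\alpha(v)\subseteq Sing_\alpha(v)^{c}$, so that the theorem is available; and second, that for a badly approximable $A$ the exceptional set $L_\alpha(v,A,\varepsilon)$ is \emph{cofinite} in $\N$ once $\varepsilon$ is small enough. A Cauchy condensation argument then turns the hypothesis $\sum_{r} r^{m-1}\prod_i\psi_i(r)=\infty$ into the divergence condition $\sum_{\ell\in L_\alpha(v,A,\varepsilon)} 2^{m\ell}\prod_i\psi_i(2^{\ell})=\infty$ required by Theorem~\ref{twisted statement}, giving $\lambda_n(W_A(\Psi))=1$.

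In more detail, for the divergence case: if $A\in\Bad_\alpha(v)$ there is $c>0$ so that $\max_i|A_i\cdot\bq-p_i|^{n/(mv_i)}\ge c/|\bq|_\alpha$ for all $\bq$ with $|\bq|_\alpha$ large (nearest-integer $\bp$). This forces $A\notin Sing_\alpha(v)$: membership in $Sing_\alpha(v)$ would produce, for each large $N$, some $\bq$ with $0<|\bq|_\alpha<N$ and $N\max_i|A_i\cdot\bq-p_i|^{n/(mv_i)}$ arbitrarily small, and the witnessing $|\bq|_\alpha$ must tend to infinity (otherwise the quantity blows up), contradicting the badly approximable bound along that sequence; the degenerate possibility $A\bq=\bp$ for some $\bq\ne 0$ is itself incompatible with $\Bad_\alpha(v)$. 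Now fix $\varepsilon>0$ smaller than $\min_{1\le i\le n}c^{mv_i/n}$. If $(\bq,\bp)$ solved the defining system of $L_\alpha(v,A,\varepsilon)$ at level $\ell$ with $|\bq|_\alpha$ large, then for an index $i^{*}$ realising the maximum above one gets $|A_{i^{*}}\cdot\bq-p_{i^{*}}|>c^{mv_{i^{*}}/n}|\bq|_\alpha^{-mv_{i^{*}}/n}>c^{mv_{i^{*}}/n}2^{-\ell m v_{i^{*}}/n}\ge\varepsilon\,2^{-\ell v_{i^{*}} m/n}$, contradicting the system; the finitely many $\bq$ with $|\bq|_\alpha$ bounded are ruled out once $\ell$ is large, because $\min_{\bp}\max_i|A_i\cdot\bq-p_i|$ is positive and bounded below uniformly over them while $\varepsilon\,2^{-\ell v_i m/n}\to 0$. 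Hence $L_\alpha(v,A,\varepsilon)\supseteq\{\ell\in\N:\ell\ge\ell_0\}$ for some $\ell_0$. Finally, monotonicity of the $\psi_i$ and Cauchy condensation give that $\sum_{\ell}2^{m\ell}\prod_i\psi_i(2^{\ell})$ and $\sum_{r}r^{m-1}\prod_i\psi_i(r)$ converge or diverge together, and the former differs from $\sum_{\ell\in L_\alpha(v,A,\varepsilon)}2^{m\ell}\prod_i\psi_i(2^{\ell})$ by only finitely many terms; Theorem~\ref{twisted statement} then applies.

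For the convergence case, write $W_A(\Psi)=\limsup_{\bq}\Delta(\bq)$, where $\Delta(\bq)=\bigcup_{\bp\in\Z^{n}}\{\mb\in[0,1]^{n}:|A_i\cdot\bq-b_i-p_i|<\psi_i(|\bq|_\alpha)\ (1\le i\le n)\}$; this is legitimate since, for $|\bq|_\alpha$ large, $\psi_i(|\bq|_\alpha)\ll|\bq|_\alpha^{-v_i m/n}$ is $<\tfrac12$, so only finitely many $\bp$ are relevant to each $\bq$. The set $\Delta(\bq)$ is a product over the coordinates $b_1,\dots,b_n$, and in the $i$th coordinate it is a $1$-periodic union of intervals of length $2\psi_i(|\bq|_\alpha)$ intersected with $[0,1]$, of measure at most $2\psi_i(|\bq|_\alpha)$; hence $\lambda_n(\Delta(\bq))\le 2^{n}\prod_i\psi_i(|\bq|_\alpha)$. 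Grouping $\bq$ into the dyadic shells $2^{\ell}\le|\bq|_\alpha<2^{\ell+1}$, each containing $\asymp 2^{m\ell}$ integer points, and using monotonicity, $\sum_{\bq}\lambda_n(\Delta(\bq))\ll\sum_{\ell}2^{m\ell}\prod_i\psi_i(2^{\ell})\asymp\sum_r r^{m-1}\prod_i\psi_i(r)<\infty$, so the convergence Borel--Cantelli lemma gives $\lambda_n(W_A(\Psi))=0$.

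The only genuinely delicate point is the bookkeeping in the divergence case: one has to pass between the two natural height functions --- $|\bq|_\alpha$, which appears in the definition of $\Bad_\alpha(v)$, and the dyadic scale $2^{\ell}$, which appears in $L_\alpha(v,A,\varepsilon)$ --- and correctly absorb the finitely many small $\bq$ (and hence the finitely many small $\ell$) when showing $L_\alpha(v,A,\varepsilon)$ is cofinite. The remaining ingredients --- the inclusion $\Bad_\alpha(v)\subseteq Sing_\alpha(v)^{c}$, the Cauchy condensation comparison of the two series, and the first Borel--Cantelli estimate --- are routine.
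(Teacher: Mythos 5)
Your proposal is correct and follows essentially the same route as the paper: the divergence half is reduced to Theorem~\ref{twisted statement} via the inclusion $\Bad_{\alpha}(v)\subset Sing_{\alpha}(v)^{c}$ and the cofiniteness of $L_{\alpha}(v,A,\varepsilon)$ (the paper cites Lemma~\ref{v-bad return sequence} for the latter, which you reprove in-line), together with a Cauchy condensation comparison of the two series, and the convergence half is a direct Borel--Cantelli estimate over dyadic shells in $|\bq|_{\alpha}$, exactly as the paper carries out at the end of \S\ref{twisted ubiquity and proof}.
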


\begin{proof}
    This easily follows from Theorem~\ref{twisted statement} on the observation that $\Bad_{\alpha}(v) \subset Sing_{\alpha}(v)^{c}$ and that $L_{\alpha}(v,A,\varepsilon)\supset \N_{\geq k}$ for some $\varepsilon>0$ and $k \in \N$ when $A\in \Bad_{\alpha}(v)$ (see Lemma~\ref{v-bad return sequence} in \S 2). The convergence case will be proven at the end of \S \ref{twisted ubiquity and proof}. 
\end{proof}

In \cite{Kim23} Kim proves the analogue of Theorem~\ref{twisted statement} in the case where $\alpha=(1,\dots,1)$, $v=(1,\dots,1)$ and $\Psi=(\psi,\dots,\psi)$. It is from this paper that we draw our inspirations to prove our weighted analogues, see \cite[Theorem 1.3, 
Corollary 1.6]{Kim23} for more details. It should be noted, in particular, that Kim's version of our result is in terms of the Hausdorff $s$-measure for $0\leq s\leq n$, and so, in particular, they prove that, for $A\in \Bad_{(1,\dots,1)}(1,\dots,1)$ and $\Psi(q)=(q^{-\tau}, \dots, q^{-\tau})$ with $\tau>\frac{m}{n}$, that
\begin{equation*}
    \dimh W_{A}(\Psi)=\frac{m}{\tau}.
\end{equation*}

 Given our setup, we are able to prove the weighted analogue of their result as follows.  
\begin{theorem} \label{dim result}
    Let $v=(1,\dots, 1)$. Let $A\in Bad_{\alpha}(v)$ and let $\Psi=(\psi_{1}, \dots, \psi_{n})$ be an $n$-tuple of functions with each
    \begin{equation*}
        \psi_{i}(r)=r^{-\tau_{i}} \quad \text{ and } \quad  \tau_{i} > \frac{m}{n} \quad (1\leq i \leq n).
    \end{equation*}
    Then,
    \begin{equation*}
        \dimh W_{A}(\Psi)= \min_{1\leq i \leq n}\left\{  \frac{m+\sum_{i:\tau_{j}>\tau_{i}}(\tau_{j}-\tau_{i})}{\tau_{j}}\right\}=s.
    \end{equation*}
    Furthermore, for any ball $B\subset [0,1]^{n}$ we have that
    \begin{equation*}
        \cH^{s}\left(B\cap W_{A}(\Psi)\right)=\infty\, .
    \end{equation*}
\end{theorem}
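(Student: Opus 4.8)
The plan is to exhibit $W_{A}(\Psi)$ as a $\limsup$ set of rectangles, deduce the upper bound $\dimh W_{A}(\Psi)\le s$ from the obvious covering, and obtain both the matching lower bound and the statement $\hs(B\cap W_{A}(\Psi))=\infty$ from the weighted mass transference principle of Wang--Wu applied to a full-measure ``base'' system supplied by the weighted ubiquity of \S2. Concretely, for $(\bq,\bp)\in(\Z^{m}\setminus\{0\})\times\Z^{n}$ the solution set of \eqref{deftwisted} is the box $R_{\bq,\bp}=\prod_{i=1}^{n}\bigl(A_{i}\cdot\bq-p_{i}-|\bq|_{\alpha}^{-\tau_{i}},\,A_{i}\cdot\bq-p_{i}+|\bq|_{\alpha}^{-\tau_{i}}\bigr)$, so $W_{A}(\Psi)=[0,1]^{n}\cap\limsup R_{\bq,\bp}$. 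Organising the index set dyadically by $|\bq|_{\alpha}\asymp 2^{\ell}$: since $|\cdot|_{\alpha}=\max_{i}|\cdot|^{1/\alpha_{i}}$ and $\sum_{i}\alpha_{i}=m$, there are $\asymp 2^{\ell m}$ admissible $\bq$ at level $\ell$, each carrying $O(1)$ vectors $\bp$ with $R_{\bq,\bp}\cap[0,1]^{n}\neq\emptyset$, and every rectangle at level $\ell$ has sidelengths comparable to $(2^{-\ell\tau_{1}},\dots,2^{-\ell\tau_{n}})$.

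For the upper bound, fix $j\in\{1,\dots,n\}$ and, for each $L$, cover $W_{A}(\Psi)\subseteq\bigcup_{\ell\ge L}\bigcup_{|\bq|_{\alpha}\asymp 2^{\ell},\,\bp}R_{\bq,\bp}$ by cubes of sidelength $2^{-\ell\tau_{j}}$. Covering one level-$\ell$ rectangle costs $\asymp\prod_{i:\tau_{i}<\tau_{j}}2^{\ell(\tau_{j}-\tau_{i})}$ such cubes, so the resulting $\hs$-sum over level $\ell$ is $\asymp 2^{\ell\left(m+\sum_{i:\tau_{i}<\tau_{j}}(\tau_{j}-\tau_{i})-\tau_{j}s\right)}$; this is summable over $\ell\ge L$ and tends to $0$ as $L\to\infty$ whenever $s>\bigl(m+\sum_{i:\tau_{i}<\tau_{j}}(\tau_{j}-\tau_{i})\bigr)/\tau_{j}$. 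Letting $s$ decrease to this threshold and minimising over $j$ yields $\dimh W_{A}(\Psi)\le s$.

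For the lower bound, inflate each $R_{\bq,\bp}$ to the cube $\widehat R_{\bq,\bp}$ with the same centre and sidelength $|\bq|_{\alpha}^{-m/n}$; since $\tau_{i}>m/n$ this is a genuine enlargement in every coordinate, and $\limsup\widehat R_{\bq,\bp}$ has full Lebesgue measure by Corollary~\ref{bad_corollary} applied with $\psi_{i}(r)=r^{-m/n}$, for which $\sum_{r}r^{m-1}\prod_{i}r^{-m/n}=\sum_{r}r^{-1}=\infty$; more precisely the weighted ubiquity of \S2 (in the sense of Kleinbock--Wang) shows the centres $(A_{i}\cdot\bq-p_{i})_{i}$ form a ubiquitous system at scale $2^{-\ell m/n}$. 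Applying the weighted mass transference principle of Wang--Wu to this base system of cubes, with shrinking exponent vector $(n\tau_{1}/m,\dots,n\tau_{n}/m)$ (each entry $\ge 1$), transfers the full-measure statement to $\limsup R_{\bq,\bp}$ and produces the dimension $\min_{j}\bigl(m+\sum_{i:\tau_{i}<\tau_{j}}(\tau_{j}-\tau_{i})\bigr)/\tau_{j}=s$; combined with the upper bound this gives $\dimh W_{A}(\Psi)=s$. Moreover $s<n$ (taking $j$ with $\tau_{j}$ minimal gives $s\le m/\tau_{j}<n$), so the principle also returns $\hs\bigl(W_{A}(\Psi)\bigr)=\hs([0,1]^{n})=\infty$, and since the base collection is locally ubiquitous everywhere the same argument run inside an arbitrary ball $B$ yields $\hs\bigl(B\cap W_{A}(\Psi)\bigr)=\infty$.

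I expect the main difficulty to be matching our situation to the precise hypotheses of the Wang--Wu theorem: verifying that the inflated cubes $\{\widehat R_{\bq,\bp}\}$ constitute a genuine local ubiquitous system with the correct scaling function rather than merely having a full-measure $\limsup$, and checking that the dimension output by their theorem simplifies to the stated $s$. Establishing the requisite weighted ubiquity for the centres $(A_{i}\cdot\bq-p_{i})_{i}$ when $A\in\Bad_{\alpha}(v)$ --- the content of \S2, where badly approximability is used to keep the centres from clustering at scale $2^{-\ell m/n}$ --- is the substantive input, whereas the covering estimate and the dyadic bookkeeping above are routine.
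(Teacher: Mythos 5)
Your proposal follows the same two-part strategy as the paper: an upper bound via a dyadic covering of each rectangle $\Delta(\{A\bq\},\Psi(|\bq|_\alpha))$ by cubes of sidelength $|\bq|_\alpha^{-\tau_j}$, and a lower bound (and the infinite $\cH^{s}$ measure) obtained by feeding the weighted local ubiquity of \S3.1, with base scale $\rho_i(r)\asymp r^{-m/n}$, into the Wang--Wu mass transference principle (Theorem~\ref{MTPRR}) with $a_i=m/n$, $t_i=\tau_i-m/n$, and checking $s<n$. The only small caveat is a misattribution: the ubiquity is the content of the Proposition in \S3.1 (for $A\in Sing_\alpha(v)^c$, built from the Dirichlet-type Corollary~\ref{twisted_dirichlet} and the equidistribution Lemma~\ref{weyl kim}), not of \S2, and it is non-singularity plus equidistribution rather than badly approximability that drives the ubiquity estimate; $\Bad_\alpha(v)$ merely ensures the relevant index set is cofinite.
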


\begin{theorem} \label{dim result 2}
     Fix $n=2$ and let $v=(v_{1},v_{2})$. Let $A\in Bad_{\alpha}(v)$ and let $\Psi=(\psi_{1},\psi_{2})$ be a pair of functions with each
     \begin{equation*} \label{normal condition}
         \psi_{i}(r)=r^{-\tau_{i}}\, , \quad \text{ with } \quad \tau_{i}\geq v_{i}\frac{m}{2} \quad (i=1,2).
     \end{equation*}
     Furthermore, suppose that
     \begin{equation} \label{annoying condition}
         \min\left\{\frac{\min\{v_{1},v_{2}\} m}{\min\{\tau_{1},\tau_{2}\}n},\frac{\min\{v_{1},v_{2}\}}{\max\{v_{1},v_{2}\}} \right\}\geq \frac{m-\min\{\tau_{1},\tau_{2}\}}{\max\{\tau_{1},\tau_{2}\}}\, .
     \end{equation}
     Then,
     \begin{equation*}
         \dimh W_{A}(\Psi)=\frac{m+\max\{\tau_{1},\tau_{2}\}-\min\{\tau_{1},\tau_{2}\}}{\max\{\tau_{1},\tau_{2}\}}.
     \end{equation*}
 \end{theorem}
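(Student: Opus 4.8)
The plan is to establish the two matching bounds $\dimh W_A(\Psi)\le s$ and $\dimh W_A(\Psi)\ge s$. Permuting the two coordinates, I assume throughout that $\tau_1=\min\{\tau_1,\tau_2\}\le\tau_2=\max\{\tau_1,\tau_2\}$, so that $s=\frac{m+\tau_2-\tau_1}{\tau_2}=1+\frac{m-\tau_1}{\tau_2}$ and $\psi_1\ge\psi_2$ on $[1,\infty)$, and I record that
\begin{equation*}
W_A(\Psi)=\limsup_{\bq}\ \prod_{i=1}^{2}B\!\left(\{A_i\cdot\bq\},\ |\bq|_\alpha^{-\tau_i}\right)\subseteq[0,1]^2,
\end{equation*}
a limsup of rectangles, where for each $\bq$ only $O(1)$ vectors $\bp$ are admissible since $\mb\in[0,1]^2$. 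For the upper bound (which needs nothing about $A$) I would group the $\bq$ dyadically: there are $\asymp 2^{m\ell}$ vectors with $|\bq|_\alpha\in[2^{\ell-1},2^{\ell})$, each giving $O(1)$ rectangles of sidelengths $\asymp 2^{-\ell\tau_1}\times 2^{-\ell\tau_2}$, and I cover each such rectangle by $\asymp 2^{\ell(\tau_2-\tau_1)}$ balls of radius $2^{-\ell\tau_2}$ (the shorter side). For any $L$ this produces a cover of the level-$\ge L$ part of the limsup whose sum of $s'$-th powers of diameters is
\begin{equation*}
\asymp\ \sum_{\ell\ge L}2^{m\ell}\cdot 2^{\ell(\tau_2-\tau_1)}\cdot\bigl(2^{-\ell\tau_2}\bigr)^{s'}=\sum_{\ell\ge L}2^{\ell(m+\tau_2-\tau_1-s'\tau_2)}\ \longrightarrow\ 0\quad(L\to\infty)
\end{equation*}
for every $s'>s$; hence $\cH^{s'}(W_A(\Psi))=0$ and $\dimh W_A(\Psi)\le s$. (Covering each rectangle by a single ball of radius $\asymp 2^{-\ell\tau_1}$ instead yields the competing estimate $\dimh W_A(\Psi)\le m/\tau_1$; the two covering bounds both equal $s$ under the standing hypotheses $\tau_i\ge v_im/2$ and \eqref{annoying condition}.)

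For the lower bound I would first turn $A\in\Bad_\alpha(v)$ into a weighted ubiquity statement. By Lemma~\ref{v-bad return sequence} there are $\varepsilon>0$ and $k\in\N$ with $L_\alpha(v,A,\varepsilon)\supseteq\N_{\ge k}$, i.e.\ for every $\ell\ge k$ the system \eqref{system1} with $N=2^{\ell}$ has no solution. Comparing with the Dirichlet/pigeonhole statement that \eqref{system1} \emph{does} have a solution when $\varepsilon$ is replaced by a suitable absolute constant, one obtains that for all $\ell\ge k$ the anisotropic rectangles
\begin{equation*}
R_\bq:=\prod_{i=1}^{2}B\!\left(\{A_i\cdot\bq\},\ c\,2^{-\ell v_im/2}\right),\qquad |\bq|_\alpha\in[2^{\ell-1},2^{\ell}),
\end{equation*}
cover $[0,1]^2$, with $c=c(A,v,\alpha)>0$. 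Since $v_1+v_2=2$ each $R_\bq$ has measure $\asymp 2^{-m\ell}$, matching the $\asymp 2^{m\ell}$ admissible $\bq$ at level $\ell$; thus, with $\rho(N):=N^{-m/2}$, the data $\{(x_\bq,\ \rho(|\bq|_\alpha)):\bq\in\Z^m\setminus\{0\}\}$, where $x_\bq=(\{A_1\cdot\bq\},\{A_2\cdot\bq\})$, form a weighted locally ubiquitous system on $[0,1]^2$ with ubiquity function $\rho$ and weight vector $(v_1,v_2)$ — this is the weighted ubiquity of Kleinbock--Wang applied to these matrices.

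Next I would feed this into the weighted mass transference principle of Wang--Wu. At level $\ell$ one has $|\bq|_\alpha^{-\tau_i}\asymp\rho(|\bq|_\alpha)^{b_i}$ with $b_i:=2\tau_i/m$, and the hypothesis $\tau_i\ge v_im/2$ is exactly $b_i\ge v_i$, so the $\Psi$-rectangles shrink each ubiquitous rectangle $R_\bq$ anisotropically. The principle then yields the lower bound $\dimh W_A(\Psi)\ge\min\!\bigl\{\tfrac{m}{\tau_1},\ \tfrac{m+\tau_2-\tau_1}{\tau_2}\bigr\}$, and the compatibility condition \eqref{annoying condition} relating $(v_1,v_2)$ to $(\tau_1,\tau_2)$ is precisely what makes this minimum equal $\tfrac{m+\tau_2-\tau_1}{\tau_2}=s$. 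Together with the upper bound, $\dimh W_A(\Psi)=s$. (Only the local ubiquity is used here, not the divergence Borel--Cantelli statement behind Corollary~\ref{bad_corollary}, and the stronger Hausdorff-measure conclusion of Theorem~\ref{dim result} is not claimed.)

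The main obstacle is the lower bound, and within it two points: first, upgrading ``$A\in\Bad_\alpha(v)$'' to a genuine weighted locally ubiquitous system carrying the anisotropic radii $2^{-\ell v_im/2}$ at \emph{every} large scale, which is exactly where one needs the return set $L_\alpha(v,A,\varepsilon)$ to be cofinite rather than merely infinite; and second, verifying the hypotheses of the weighted mass transference principle in this anisotropic regime — this is where \eqref{annoying condition} is consumed, and where, for $n\ge3$, the clean value would be replaced by a more intricate minimum, explaining the restriction $n=2$. The remaining pieces (the $O(1)$ bound on admissible $\bp$, the lattice count $\asymp 2^{m\ell}$, and the torus bookkeeping in the reduction to a limsup of rectangles) are routine.
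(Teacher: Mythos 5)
Your overall strategy matches the paper's: upper bound by dyadic covering of $W_A(\Psi)$ by balls of the smaller sidelength $|\bq|_\alpha^{-\max\{\tau_1,\tau_2\}}$, and lower bound by upgrading $A\in\Bad_\alpha(v)$ (via Lemma~\ref{v-bad return sequence}) to a weighted locally ubiquitous system at the anisotropic radii $\asymp 2^{-\ell v_i m/2}$ on every sufficiently large dyadic scale, and then feeding this into the Wang--Wu mass transference principle (Theorem~\ref{MTPRR}). These are exactly the ingredients used in \S 3 of the paper.

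However, there is a genuine gap in your lower-bound step. You assert that the mass transference principle ``then yields the lower bound $\dimh W_{A}(\Psi)\ge\min\{m/\tau_1,\,s\}$, and the compatibility condition \eqref{annoying condition} is precisely what makes this minimum equal $s$.'' This cannot be right as stated: the quantity $\min\{m/\tau_1,s\}$ depends only on $(\tau_1,\tau_2,m)$, so a condition constraining $(v_1,v_2)$ cannot change it. In fact Theorem~\ref{MTPRR} returns a minimum of $d(A_i)$ over the four values $A_i\in\{v_1m/2,\,v_2m/2,\,\tau_1,\,\tau_2\}$, and which sets $\cK_1,\cK_2,\cK_3$ occur depends on how these four numbers interleave. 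Writing out the cases (as the paper does) one gets, in the case $v_1m/2<v_2m/2<\tau_1<\tau_2$, the candidate values $\{2,\,1+v_1/v_2,\,m/\tau_1,\,s\}$; and in the case $v_1m/2<\tau_1<v_2m/2<\tau_2$, the candidate values include $1+\frac{v_1 m/2}{\tau_1}$ and $1+\frac{m-\tau_1}{v_2 m/2}$ rather than $m/\tau_1$. The $v$-dependent terms $1+v_1/v_2$ and $1+\frac{v_1 m}{n\tau_1}$ are exactly the two quantities that \eqref{annoying condition} bounds below by $s$ --- that is the whole point of the hypothesis, and it is consumed in identifying the minimizer after the case analysis, not in simplifying a pre-packaged $\min\{m/\tau_1,s\}$. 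Without carrying out this case analysis you have not shown that the MTP lower bound actually reaches $s$, and you have not exhibited where condition \eqref{annoying condition} is used; as written the argument would break down precisely when one of the omitted $v$-dependent terms is the binding one.

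A smaller imprecision: in your parenthetical for the upper bound you claim the two covering bounds ``both equal $s$'' under the standing hypotheses. They are $m/\tau_1$ and $s$ and are generically different; what one needs is $m/\tau_1\ge s$ (equivalently $\tau_1\le m$) so that $s$ is the smaller one, and neither $\tau_i\ge v_i m/2$ nor \eqref{annoying condition} delivers this (indeed for $\tau_1>m$ the right-hand side of \eqref{annoying condition} is negative and the condition is vacuous). The paper's own upper-bound passage leaves this comparison implicit as well, so this is not a fault unique to your write-up, but the claim that both bounds equal $s$ should be removed.
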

 \begin{remark}\rm
 The difference between Theorem~\ref{dim result} and Theorem~\ref{dim result 2} is subtle. When each of the approximation functions $\psi_{i}$ decreases at a faster rate than the Dirichlet exponent of approximation (in this setting the exponent is $\frac{m}{n}$) then Theorem~\ref{dim result} is optimal. However, when this is not the case, Theorem~\ref{dim result 2} is required. \par
 Condition \eqref{annoying condition} seems unnecessary but it is required for our method to work in the lower bound. In particular, observe that if $\min\{\tau_{1},\tau_{2}\}\geq m$, that is the approximation functions are decreasing fast enough, then \eqref{annoying condition} is automatically satisfied.
\end{remark}

It should be noted that a set of particular interest within the setting of twisted inhomogeneous approximation is the set of $\xi$-badly approximable points, which are generally the set of points $b\in[0,1]$ such that \eqref{mink-statement} becomes false when the right hand side is multiplied by some arbitrarily small constant. In this article, we do not consider such a set (although we can deduce a metric result on weighted higher dimensional analogue of $\xi$-badly approximable points from Theorem~\ref{twisted statement}). For more details on the metric properties of these sets, we refer the reader to \cite{BengMosch2017,Harrap12, MosHar, Kim07, Kim14,   Ramirez18, Tseng08, LRSY} and references within. This is a particularly active area of research, as far as we are aware there are at least two forthcoming papers in this area of research. In \cite{BDGW2} the metric results of higher dimensional analogues of $\xi$-badly approximable points are studied in detail and generalised to the $S$-arithmetic setting, and in \cite{MRS} the measure of $\xi$-badly approximable points when shifted by some constant is proven to be null for any algebraic measure on the $m$-dimensional torus (see \cite[Theorem 1.6]{LRSY}). In both these settings $\xi$ satisfies certain properties. \par 

The rest of the paper is laid out as follows. In the next section, we recall and prove a few basic properties on the set of $(v,\alpha)$-non-singular matrices. We also recall the setup for weighted ubiquitous systems as introduced in \cite{KW23, WW19}. Lastly, in \S 3, we give the proofs of our main results.

\noindent{\bf Acknowledgments:} The research of both authors is supported by the Australian Research Council discovery project 200100994. We would also like to thank Victor Beresnevich for many useful comments on an earlier draft.

 \section{Preliminaries and Auxiliary Results}

 \subsection{Properties of singular and badly approximable matrices}
 
 The following observation was made in \cite{Kim23} in the case $v=(1,\dots,1)$ and $\alpha=(1,\dots,1)$. As we will require such an observation in our results we state and prove the following easy lemma.
    \begin{lemma} \label{v-sing equivalency}
    $A\in Sing_{\alpha}(v)$ if and only if for any $\varepsilon>0$ the set
\begin{equation*}
    L_{\alpha}(v,A,\varepsilon):=\left\{\ell\in \N : \begin{cases} |A_{i}\cdot \bq - p_{i}|<\varepsilon 2^{-\ell v_{i}\frac{m}{n}} \quad (1\leq i \leq n), \\
    0<|\bq|_{\alpha}<2^{\ell}
    \end{cases} \text{\rm has no solution } (\bq,\bp)\in\Z^{m+n} \right\}
\end{equation*}
is finite.
 \end{lemma}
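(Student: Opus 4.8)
The plan is to prove the two implications separately, in each case reconciling the defining system of $(v,\alpha)$-singularity at a general height $N$ with the \emph{same} system at the dyadic heights $N=2^{\ell}$ that appear in the definition of $L_{\alpha}(v,A,\varepsilon)$. Passing between a general $N$ and the nearest power of $2$ costs only a bounded multiplicative factor in the parameter $\varepsilon$, and keeping track of that factor is really the only content of the argument; no continued fractions, dynamics, or geometry enter.

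For the forward implication I would fix $\varepsilon>0$ and use the definition of $Sing_{\alpha}(v)$ directly: there is $N_{0}=N_{0}(\varepsilon)$ such that \eqref{system1} has a solution $(\bq,\bp)\in\Z^{m+n}$ for every $N\geq N_{0}$. Specialising to $N=2^{\ell}$ shows that every integer $\ell$ with $2^{\ell}\geq N_{0}$ lies outside $L_{\alpha}(v,A,\varepsilon)$, so $L_{\alpha}(v,A,\varepsilon)\subseteq\{\ell\in\N:2^{\ell}<N_{0}\}$, a finite set. This direction is immediate.

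For the converse I would assume that $L_{\alpha}(v,A,\varepsilon')$ is finite for every $\varepsilon'>0$ and verify the definition of singularity for a given $\varepsilon>0$. Set $\varepsilon':=\varepsilon\,2^{-(m/n)\max_{1\leq i\leq n}v_{i}}$ and, using finiteness of $L_{\alpha}(v,A,\varepsilon')$, pick $\ell_{0}$ with $\ell\notin L_{\alpha}(v,A,\varepsilon')$ for all $\ell\geq\ell_{0}$. Given any $N\geq 2^{\ell_{0}}$, let $\ell=\lfloor\log_{2}N\rfloor$, so that $\ell\geq\ell_{0}$ and $2^{\ell}\leq N<2^{\ell+1}$; then there is $(\bq,\bp)\in\Z^{m+n}$ with $0<|\bq|_{\alpha}<2^{\ell}$ and $|A_{i}\cdot\bq-p_{i}|<\varepsilon'2^{-\ell v_{i}m/n}$ for all $i$. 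The height bound gives $0<|\bq|_{\alpha}<2^{\ell}\leq N$, and since $2^{\ell}>N/2$ one has $2^{-\ell v_{i}m/n}<2^{v_{i}m/n}N^{-v_{i}m/n}$, whence $\varepsilon'2^{-\ell v_{i}m/n}<\varepsilon N^{-v_{i}m/n}$ by the choice of $\varepsilon'$. Thus $(\bq,\bp)$ solves \eqref{system1} at height $N$; as $N\geq 2^{\ell_{0}}$ and $\varepsilon>0$ were arbitrary, $A\in Sing_{\alpha}(v)$.

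The only place where any care is needed — the ``main obstacle'', such as it is — is this bookkeeping in the converse direction: choosing $\varepsilon'$ small enough relative to $\varepsilon$ that dilating a solution from height $2^{\ell}$ up to a general height $N\in[2^{\ell},2^{\ell+1})$ does no damage, and lining up the two forms of the quantifier (``$L_{\alpha}$ is finite'' versus ``for all sufficiently large $N$''). Everything else is a routine unwinding of the two definitions.
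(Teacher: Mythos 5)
Your proof is correct and follows essentially the same route as the paper's: the forward direction is immediate by specialising $N=2^{\ell}$, and the converse passes from $\varepsilon'=\varepsilon\,2^{-(m/n)\max_i v_i}$ at dyadic heights to $\varepsilon$ at general $N\in[2^{\ell},2^{\ell+1})$, absorbing the loss of a factor $2^{v_i m/n}\leq 2^{(m/n)\max_i v_i}$ exactly as the paper does. (Incidentally your normalisation of $\varepsilon'$ matches the displayed inequality chain in the paper rather than the slightly mis-typed exponent in the surrounding prose.)
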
 

 \begin{remark} \rm 
 Observe that this result naturally implies that if $A$ is $(v,\alpha)$- non-singular, that is, $A\in Sing_{\alpha}(v)^{c}=\R^{n\times m}\backslash Sing_{\alpha}(v)$, then $L_{\alpha}(v,A,\varepsilon)$ is unbounded. In particular this means the summation appearing in Theorem~\ref{twisted statement} is infinite.
 \end{remark}
\begin{proof}
    The forward implication that $A\in Sing_{\alpha}(v)$ implies $L_{\alpha}(v,A,\varepsilon)$ is immediate by the definition and setting $N=2^{\ell}$. To see the reverse implication note that if $L_{\alpha}\left(v,A,\tfrac{\varepsilon}{2^{\max v_{i}}}\right)$ is finite, say $L_{\alpha}\left(v,A,\tfrac{\varepsilon}{2^{\max v_{i}}}\right) \subset \{1,\dots,k\}$, then for all $\ell >k$ and any $2^{\ell}\leq N < 2^{\ell+1}$ we have that
    \begin{equation*}
        \begin{cases} |A_{i}\cdot \bq - p_{i}|<\frac{\varepsilon}{2^{\frac{m}{n}\max v_{i}}} 2^{-\ell v_{i}\frac{m}{n}} \leq \varepsilon 2^{-(\ell+1)v_{i}\frac{m}{n}} < \varepsilon N^{-v_{i}\frac{m}{n}} \quad (1\leq i \leq n), \\
    0<|\bq|_{\alpha}<2^{\ell}\leq N
    \end{cases}
    \end{equation*}
    has solution $(\bq,\bp)\in\Z^{m+n}$. Since this is true for any choice of $\varepsilon>0$ we have that $A\in Sing_{\alpha}(v)$.
\end{proof}

As stated in \cite{Kim23} in the case of $v=(1,\dots,1)$ and $\alpha=(1,\dots,1)$ we have the following result.
\begin{lemma} \label{v-bad return sequence}
$A\in \Bad_{\alpha}(v)$ if and only if for some $\varepsilon>0$ there exists $k\in\N$ such that $L_{\alpha}(v,A,\varepsilon)\supset \N_{\geq k}$.
\end{lemma}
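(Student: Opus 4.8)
The plan is to establish both implications by directly unpacking the definitions of $\Bad_{\alpha}(v)$ and of the return set $L_{\alpha}(v,A,\varepsilon)$, using the same dyadic comparison trick that appeared in the proof of Lemma~\ref{v-sing equivalency}. Recall that $A \in \Bad_{\alpha}(v)$ means there is a constant $c = c(A) > 0$ with $\max_{1 \le i \le n} |\bq|_{\alpha} |A_i \cdot \bq - p_i|^{n/(mv_i)} \ge c$ for all $\bq$ with $|\bq|_{\alpha}$ large and all $\bp \in \Z^n$; equivalently, for every $\bq$ with $|\bq|_{\alpha}$ sufficiently large there is some index $i$ with $|A_i \cdot \bq - p_i| \ge (c/|\bq|_{\alpha})^{v_i m/n} = c^{v_i m / n} |\bq|_{\alpha}^{-v_i m/n}$.

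First I would prove the forward implication. Assume $A \in \Bad_{\alpha}(v)$ with constant $c$, valid for $|\bq|_{\alpha} \ge Q_0$. Pick $\varepsilon > 0$ small enough that $\varepsilon < c^{v_i m/n}$ for every $i$ (possible since there are finitely many indices), and pick $k \in \N$ with $2^{k} \ge Q_0$, adjusting so the constraint $0 < |\bq|_{\alpha} < 2^{\ell}$ forces $|\bq|_{\alpha} \ge Q_0$ — more carefully, if $2^{\ell-1} \ge Q_0$ then any admissible $\bq$ has $|\bq|_\alpha \ge 1$, so one should instead argue that if $\bq$ satisfies $0 < |\bq|_{\alpha} < 2^{\ell}$ and the first system of inequalities, then using $|A_i \cdot \bq - p_i| < \varepsilon 2^{-\ell v_i m/n} \le \varepsilon |\bq|_\alpha^{-v_i m /n}$ (valid since $|\bq|_\alpha < 2^\ell$ and $v_i m/n > 0$) we would get $\max_i |\bq|_{\alpha}^{1} |A_i \cdot \bq - p_i|^{n/(mv_i)} < \varepsilon^{n/(mv_i)} \cdot |\bq|_\alpha \cdot |\bq|_\alpha^{-1} = \varepsilon^{n/(m v_i)}$; choosing $\varepsilon$ with $\varepsilon^{n/(mv_i)} < c$ for all $i$ and $2^\ell > Q_0$ yields a contradiction to badness once $|\bq|_\alpha \ge Q_0$. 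Hence for all $\ell \ge k$ (with $2^k \ge Q_0$) the system has no solution, i.e. $\N_{\ge k} \subset L_{\alpha}(v,A,\varepsilon)$.

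For the reverse implication, suppose $L_{\alpha}(v,A,\varepsilon) \supset \N_{\ge k}$ for some $\varepsilon > 0$ and $k \in \N$. Take any $\bq$ with $|\bq|_{\alpha}$ large and any $\bp$; choose $\ell \ge k$ with $2^{\ell-1} \le |\bq|_{\alpha} < 2^{\ell}$. Since $\ell \in L_{\alpha}(v,A,\varepsilon)$, the system with parameter $\ell$ has no solution, so in particular this $(\bq,\bp)$ fails it; as $0 < |\bq|_{\alpha} < 2^{\ell}$ holds, it must be the first batch of inequalities that fails, giving an index $i$ with $|A_i \cdot \bq - p_i| \ge \varepsilon 2^{-\ell v_i m / n} \ge \varepsilon 2^{-v_i m/n} |\bq|_{\alpha}^{-v_i m /n}$ (using $2^{\ell} \le 2|\bq|_{\alpha}$). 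Raising to the power $n/(mv_i)$ and multiplying by $|\bq|_{\alpha}$ gives $|\bq|_{\alpha} |A_i \cdot \bq - p_i|^{n/(mv_i)} \ge \varepsilon^{n/(mv_i)} 2^{-n/m} =: c_i > 0$, so with $c = \min_i c_i$ the liminf defining $\Bad_{\alpha}(v)$ is at least $c > 0$, i.e. $A \in \Bad_{\alpha}(v)$. The only mildly delicate point — the main obstacle, such as it is — is bookkeeping the constants through the dyadic rounding and the exponents $n/(mv_i)$ so that a single $\varepsilon$ (resp. a single $c$) works uniformly over the finitely many rows $i$; this is precisely the same manoeuvre as in Lemma~\ref{v-sing equivalency}, so no genuinely new difficulty arises.
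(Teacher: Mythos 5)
Your argument is correct and follows the paper's proof in both directions: in the forward direction you plug the badness lower bound into the system defining $L_{\alpha}(v,A,\varepsilon)$ and choose $\varepsilon$ small enough to produce a contradiction, and in the reverse direction you dyadically round any large $\bq$ up to a return time $\ell$ and extract a positive uniform lower bound. Two small remarks, neither fatal: (i) both you and the paper gloss over the range $0<|\bq|_{\alpha}<Q_0$ in the forward direction — those finitely many $\bq$ are ruled out because $A\in\Bad_{\alpha}(v)$ forbids integer relations, so $\min_{\bp}\max_{i}|A_i\cdot\bq-p_i|>0$ for each such $\bq$ and the system fails once $\ell$ is large enough; (ii) $(\varepsilon 2^{-v_i m/n}|\bq|_{\alpha}^{-v_i m/n})^{n/(mv_i)}\cdot|\bq|_{\alpha}=\varepsilon^{n/(mv_i)}/2$, not $\varepsilon^{n/(mv_i)}2^{-n/m}$, but the constant is still positive so the conclusion stands.
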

 \begin{proof}
     If $A\in\Bad_{\alpha}(v)$ then there exists some $c(A)>0$ such that
     \begin{equation*}
          \max_{1\leq i \leq n} |A_{i}\cdot \bq -p_{i}|^{1/v_{i}} >c(A)|\bq|_{\alpha}^{-\frac{m}{n}}
     \end{equation*}
     for all sufficiently large $\bq\in\Z^{m}\backslash\{0\}$. Without loss of generality assume this is true for all $\bq\in \Z^{m}$ such that $|\bq|_{\alpha}\geq 2^{t}$. Hence for any $\ell>t$ and any $|\bq|_{\alpha}<2^{\ell}$ we have that
     \begin{equation*}
         \max_{1\leq i \leq n} |A_{i}\cdot \bq -p_{i}|^{1/v_{i}} >c(A)|\bq|_{\alpha}^{-\frac{m}{n}}>c(A)2^{-\ell\frac{m}{n}}.
     \end{equation*}
     Thus $L_{\alpha}\left(v,A,c(A)^{\min v_{i}}\right) \supset \N_{>t}$. \par 
     For the reverse implication, we have that for all $\ell>k\in\N$
     \begin{equation*}
         \begin{cases}
             \max_{1\leq i \leq n}|A_{i}\cdot\bq -p_{i}|^{1/v_{i}}>\varepsilon 2^{-\ell\frac{m}{n}}\, ,\\
             0<|\bq|_{\alpha}<2^{\ell}\, .
         \end{cases}
     \end{equation*}
     Hence for all sufficiently large $\bq\in\Z^{m}$ (all $\bq\in\Z^{m}$ such that $|\bq|_{\alpha}>2^{k}$) there exists $\ell>k$ such that $2^{\ell}<|\bq|_{\alpha}\leq 2^{\ell+1}$ so that
     \begin{equation*}
         \begin{cases}
             \max_{1\leq i \leq n}|A_{i}\cdot\bq -p_{i}|^{1/v_{i}}>\varepsilon 2^{-(\ell+1)\frac{m}{n}}=\frac{\varepsilon}{2^{\frac{m}{n}}}2^{-\ell\frac{m}{n}}> \frac{\varepsilon}{2^{\frac{m}{n}}}|\bq|_{\alpha}^{-\frac{m}{n}} ,\\
             \text{ for all } 0<|\bq|_{\alpha}<2^{\ell+1}\, .
         \end{cases}
     \end{equation*}
     This is true for all $\ell>k$, hence $A\in\Bad_{\alpha}(v)$.
 \end{proof}

\subsection{A Dirichlet-type theorem in the case of non-singular matrices}

We need the following weighted analogue of \cite[Chapter V, Theorem VI]{Cassels}, which can readily be proven from \cite[Chapter V, Theorem V]{Cassels}. For completeness, we prove the result here.

 \begin{lemma} \label{inhomogeneous transference}
     Let $A\in \R^{nm}$ and suppose there are no integer solutions $(\bq,\bp) \in \Z^{m+n}\backslash\{0\}$ to
     \begin{equation*} \label{system1}
     \left\{\begin{array}{c}
         |A\bq-\bp|_{v}<C,\\
         |\bq|_{\alpha}<N,
         \end{array}\right. \,
     \end{equation*}
     for norms $|\cdot|_{v}=\max_{1\leq i \leq n}|\cdot|^{1/v_{i}}$, $|\cdot|_{\alpha}=\max_{1\leq i \leq m} |\cdot|^{1/\alpha_{i}}$ and vectors $v=(v_{1},\dots, v_{n})\in\R^{n}_{+}$ and $\alpha=(\alpha_{1}, \dots , \alpha_{m})\in\R^{m}_{+}$ satisfying
     \begin{equation*}
         \sum_{i=1}^{n}v_{i}=n \quad \text{ and } \quad \sum_{i=1}^{m}\alpha_{i}=m\, .
     \end{equation*}
     Then for any $\textbf{b} \in \R^{n}$ there exists $(\bq,\bp) \in \Z^{m+n}$ solving
     \begin{equation*}
         \left\{\begin{array}{c}
         |A\bq -\textbf{b}-\bp|_{v}\leq c_{1} C,\\
         |\bq|_{\alpha}<c_{1} N,
         \end{array}\right.
     \end{equation*}
     for constant
     \begin{equation*}
         c_{1}=\underset{1\leq j\leq n}{\max_{1\leq i \leq m}}\left\{ \left(\frac{1}{2}\left(C^{-n}N^{-m}+1\right)\right)^{1/v_{i}}, \left(\frac{1}{2}\left(C^{-n}N^{-m}+1\right)\right)^{1/\alpha_{j}} \right\}.
     \end{equation*}
 \end{lemma}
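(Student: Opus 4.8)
The plan is to deduce the lemma from the inhomogeneous transference theorem for systems of linear forms recorded in \cite[Chapter V, Theorem V]{Cassels}, after recasting the weighted quasi-norms $|\cdot|_{v}$ and $|\cdot|_{\alpha}$ as ordinary box constraints. Regard a pair $(\bq,\bp)\in\Z^{m}\times\Z^{n}$ as a single integer vector $\mathbf{u}\in\Z^{m+n}$, and introduce the $m+n$ real linear forms in $\mathbf{u}$ given by
\[
  L_{i}(\mathbf{u}):=A_{i}\cdot\bq-p_{i}\quad(1\le i\le n),\qquad L_{n+j}(\mathbf{u}):=q_{j}\quad(1\le j\le m).
\]
In block form the coefficient matrix of $(L_{1},\dots,L_{m+n})$ is $\left(\begin{smallmatrix}A&-I_{n}\\ I_{m}&0\end{smallmatrix}\right)$, and using the $-I_{n}$ block to clear the entries of $A$ by column operations shows at once that its determinant is $\pm1$, whatever the entries of $A$ are. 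Now $|A\bq-\bp|_{v}<C$ holds precisely when $|L_{i}(\mathbf{u})|<C^{v_{i}}$ for every $1\le i\le n$, and $|\bq|_{\alpha}<N$ holds precisely when $|L_{n+j}(\mathbf{u})|<N^{\alpha_{j}}$ for every $1\le j\le m$. Thus the hypothesis of the lemma says exactly that there is no $\mathbf{u}\in\Z^{m+n}\setminus\{\0\}$ with $|L_{k}(\mathbf{u})|<C_{k}$ for all $1\le k\le m+n$, where $C_{i}:=C^{v_{i}}$ $(1\le i\le n)$ and $C_{n+j}:=N^{\alpha_{j}}$ $(1\le j\le m)$; since $\sum_{i}v_{i}=n$ and $\sum_{j}\alpha_{j}=m$ one has $C_{1}\cdots C_{m+n}=C^{n}N^{m}$.

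With this in hand I would apply \cite[Chapter V, Theorem V]{Cassels} to the forms $L_{1},\dots,L_{m+n}$ (determinant $\pm1$), with the box $(C_{k})_{k}$ just described and the inhomogeneous shift equal to $(\mb,\0)\in\R^{n}\times\R^{m}$; write $\rho_{k}$ for its $k$-th coordinate. Because the corresponding homogeneous box contains no nonzero integer point, the theorem produces an integer vector $\mathbf{u}=(\bq,\bp)\in\Z^{m+n}$ with
\[
  |L_{k}(\mathbf{u})-\rho_{k}|\ \le\ \tfrac12\bigl(1+(C_{1}\cdots C_{m+n})^{-1}\bigr)\,C_{k}\qquad(1\le k\le m+n),
\]
the factor $\tfrac12(1+(C_{1}\cdots C_{m+n})^{-1})$ being the one produced by a unimodular system. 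Splitting into the two blocks of coordinates and substituting $C_{1}\cdots C_{m+n}=C^{n}N^{m}$, $C_{i}=C^{v_{i}}$ and $C_{n+j}=N^{\alpha_{j}}$, this gives
\[
  |A_{i}\cdot\bq-b_{i}-p_{i}|\ \le\ \tfrac12\bigl(1+C^{-n}N^{-m}\bigr)C^{v_{i}}\qquad(1\le i\le n),
\]
\[
  |q_{j}|\ \le\ \tfrac12\bigl(1+C^{-n}N^{-m}\bigr)N^{\alpha_{j}}\qquad(1\le j\le m).
\]

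Finally, writing $\delta:=\tfrac12\bigl(1+C^{-n}N^{-m}\bigr)$ and taking $v_{i}$-th, respectively $\alpha_{j}$-th, roots of the two displays above before maximising over the coordinates, I obtain
\[
  |A\bq-\mb-\bp|_{v}=\max_{1\le i\le n}|A_{i}\cdot\bq-b_{i}-p_{i}|^{1/v_{i}}\ \le\ \Bigl(\max_{1\le i\le n}\delta^{1/v_{i}}\Bigr)C,
\]
\[
  |\bq|_{\alpha}=\max_{1\le j\le m}|q_{j}|^{1/\alpha_{j}}\ \le\ \Bigl(\max_{1\le j\le m}\delta^{1/\alpha_{j}}\Bigr)N.
\]
Since the constant $c_{1}$ in the statement is by definition $\max\bigl\{\max_{i}\delta^{1/v_{i}},\ \max_{j}\delta^{1/\alpha_{j}}\bigr\}$, both right-hand sides are at most $c_{1}C$ and $c_{1}N$, which is the assertion of the lemma (the distinction between the strict and non-strict inequalities is a routine technicality, handled as usual by an arbitrarily small enlargement of the constant). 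There is no substantive obstacle here: the whole proof is the translation between the weighted quasi-norms and a box of $m+n$ linear forms, and the only two points needing a little care are the verification that the determinant of that system is $\pm1$ — so that the constant coming out of \cite[Chapter V, Theorem V]{Cassels} is exactly $\tfrac12(1+(C_{1}\cdots C_{m+n})^{-1})$ — and the bookkeeping with the exponents $v_{i},\alpha_{j}$ that yields the stated $c_{1}$.
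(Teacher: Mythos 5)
Your proof is correct and follows essentially the same route as the paper. Both arguments recast the weighted constraints as a system of $m+n$ linear forms and then invoke \cite[Chapter V, Theorem V]{Cassels}; the only difference is parametrisation: you keep the forms $L_k$ unimodular and carry the exponents $C^{v_i},N^{\alpha_j}$ as the box side-lengths, whereas the paper divides each form by its side-length so that the box is the unit cube and the determinant becomes $C^{-n}N^{-m}$. After the trivial rescaling these are identical, and both yield the same factor $\tfrac12(1+C^{-n}N^{-m})$ and hence the same $c_1$.
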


 \begin{proof}
     Consider the system of inequalities on $n+m$ variables $(\bq,\bp)\in\Z^{m+n}$
     \begin{equation*}
         \left\{\begin{array}{c}
         |C^{-v_{i}}(A_{i}\cdot\bq -p_{i})|<1, \quad (1\leq i \leq n)\\
         |N^{-\alpha_{j}}q_{j}|<1, \quad (1\leq j \leq m).
         \end{array} \right.
     \end{equation*}
     For ease of notation let $f_{i}(\bq,\bp)=C^{-v_{i}}(A_{i}\cdot\bq -p_{i})$ for $1\leq i\leq n$ and $f_{i+m}(\bq,\bp)=N^{-\alpha_{j}}q_{j}$ for $1\leq i \leq m$. Then \eqref{system1} is equivalent to
     \begin{equation} \label{system2}
     \max_{1\leq i \leq n+m} |f_{i}(\bq,\bp)|<1\, .
     \end{equation}
     By assumption, \eqref{system2} has no integer solutions $(\bq,\bp)\in\Z^{m+n}\backslash \{0\}$. Furthermore, observe that the $(n+m)\times(n+m)$ matrix associated to the system of linear forms \eqref{system2} has determinant 
     $$C^{-\sum_{i=1}^{n}v_{i}}N^{-\sum_{i=1}^{m}\alpha_{i}}=C^{-n}N^{-m},$$
     and so by \cite[Theorem V]{Cassels} for any real number $\textbf{b}^{*}\in\R^{n+m}$ there are integer solutions to 
     \begin{equation} \label{system3}
         \max_{1\leq i \leq n+m}|f_{i}(\bq,\bp)-b^{*}_{i}|<\frac{1}{2}(C^{-n}N^{-m}+1)\, .
     \end{equation}
     Setting $\textbf{b}^{*}=(C^{-v_{1}}b_{1}, \dots , C^{-v_{n}}b_{n},0,\dots, 0)\in \R^{n+m}$ in \eqref{system3} gives us that that the system of inequalities
     \begin{equation*}
         \left\{\begin{array}{c}
         |A_{i}\cdot \bq -b_{i}-p_{i}|\leq \frac{1}{2}(C^{-n}N^{-m}+1)C^{v_{i}}, \quad (1\leq i \leq n)\\
         |q_{i}|<\frac{1}{2}(C^{-n}N^{-m}+1)N^{\alpha_{i}}, \quad (1\leq i \leq m) 
         \end{array}\right.
     \end{equation*}
     has integer solutions $(\bq,\bp)\in\Z^{n+m}\backslash\{0\}$ for any $\textbf{b}\in\R^{n}$. Rearranging in terms of the norms $|\cdot|_{v}$ and $|\cdot|_{\alpha}$ completes the proof.
 \end{proof}

 For Lemma~\ref{inhomogeneous transference} we can easily deduce the following corollary
 \begin{corollary} \label{twisted_dirichlet}
     Let $A\in Sing_{\alpha}(v)^{c}=[0,1]^{nm}\backslash Sing_{\alpha}(v)$. Then for any $\mb \in \R^{n}$ there exists some $\varepsilon>0$ such that for all $\ell \in L_{\alpha}(v,A,\varepsilon)$ the system of inequalities
     \begin{equation*}
         \begin{cases}
             |A_{i}\cdot \bq -p_{i}-b_{i}|<\varepsilon c_{2}2^{-\ell v_{i}\frac{m}{n}} \quad (1\leq i \leq n),\\
             |\bq|_{\alpha}< c_{2}2^{\ell}
         \end{cases}
     \end{equation*}
     has integer solution $(\bq,\bp) \in \Z^{m+n}$ for $c_{2}=\left(\frac{1}{2}(\varepsilon^{-n}+1)\right)^{\frac{1}{\min_{i,j}\{v_{i},\alpha_{j}\}}}$.
 \end{corollary}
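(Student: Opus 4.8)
The plan is to run, once more, the rescaling argument underlying Lemma~\ref{inhomogeneous transference}, this time with the $\ell$-dependent normalisation built into the linear forms. Since $A\in Sing_{\alpha}(v)^{c}$, Lemma~\ref{v-sing equivalency} — together with the fact that a smaller $\varepsilon$ only enlarges $L_{\alpha}(v,A,\varepsilon)$ — lets us fix $\varepsilon\in(0,1]$ for which $L_{\alpha}(v,A,\varepsilon)$ is infinite; this is the $\varepsilon$ in the statement. Fix $\ell\in L_{\alpha}(v,A,\varepsilon)$. By the definition of $L_{\alpha}(v,A,\varepsilon)$ there is no $(\bq,\bp)\in\Z^{m+n}$ with $\bq\neq\0$, $|\bq|_{\alpha}<2^{\ell}$ and $|A_{i}\cdot\bq-p_{i}|<\varepsilon 2^{-\ell v_{i}m/n}$ for all $i$; since $\varepsilon\le 1$ and $\ell\ge 1$, a solution with $\bq=\0$ would force $\bp=\0$, so this system has no solution in $\Z^{m+n}\setminus\{\0\}$ at all.

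Next I would consider the $n+m$ linear forms in the variables $(\bq,\bp)$ given by $\varepsilon^{-1}2^{\ell v_{i}m/n}\,(A_{i}\cdot\bq-p_{i})$ for $1\le i\le n$ and $2^{-\ell\alpha_{j}}q_{j}$ for $1\le j\le m$. The previous paragraph says precisely that the origin is the only integer point at which all these forms have absolute value $<1$, while the coefficient determinant of the system equals $\varepsilon^{-n}2^{\ell\frac{m}{n}\sum_{i}v_{i}}2^{-\ell\sum_{j}\alpha_{j}}=\varepsilon^{-n}$; the powers of $2^{\ell}$ cancel because $\sum_{i}v_{i}=n$ and $\sum_{j}\alpha_{j}=m$, and this cancellation — i.e. the particular Dirichlet exponent $v_{i}\frac{m}{n}$ appearing in $L_{\alpha}$ — is the crux. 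By Cassels' inhomogeneous minimum theorem \cite[Chapter V, Theorem V]{Cassels}, which is also what is used to prove Lemma~\ref{inhomogeneous transference}, for any real target there is an integer point at which all $n+m$ forms differ from the target by less than $\tfrac{1}{2}(\varepsilon^{-n}+1)$. Choosing the target to be $\varepsilon^{-1}2^{\ell v_{i}m/n}b_{i}$ in the $i$th coordinate ($1\le i\le n$) and $0$ in the last $m$ coordinates yields $(\bq,\bp)\in\Z^{m+n}$ with $|A_{i}\cdot\bq-p_{i}-b_{i}|<\tfrac{\varepsilon}{2}(\varepsilon^{-n}+1)2^{-\ell v_{i}m/n}$ $(1\le i\le n)$ and $|q_{j}|<\tfrac{1}{2}(\varepsilon^{-n}+1)2^{\ell\alpha_{j}}$ $(1\le j\le m)$.

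It remains to repackage the bounds on the $q_{j}$ into $|\bq|_{\alpha}$ and to compare constants. The normalisations force $\min_{i}v_{i}\le 1$ and $\min_{j}\alpha_{j}\le 1$, hence $\tfrac{1}{\min_{i,j}\{v_{i},\alpha_{j}\}}\ge 1$; since also $\tfrac{1}{2}(\varepsilon^{-n}+1)\ge 1$, we get $\tfrac{1}{2}(\varepsilon^{-n}+1)\le c_{2}$ and $|q_{j}|^{1/\alpha_{j}}<\big(\tfrac{1}{2}(\varepsilon^{-n}+1)\big)^{1/\alpha_{j}}2^{\ell}\le c_{2}2^{\ell}$, so that $|\bq|_{\alpha}<c_{2}2^{\ell}$ and $|A_{i}\cdot\bq-p_{i}-b_{i}|<\varepsilon c_{2}2^{-\ell v_{i}m/n}$, as required. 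The whole argument is essentially a computation; the one substantive point is the $\ell$-independence of the determinant above (so that $\tfrac{1}{2}(\varepsilon^{-n}+1)$, and hence $c_{2}$, does not drift with $\ell$), and I expect the fiddliest step to be checking that the exponent $1/\min_{i,j}\{v_{i},\alpha_{j}\}$ in $c_{2}$ is generous enough to absorb the passage between the coordinate boxes defining $L_{\alpha}$ and the weighted quasi-norm $|\cdot|_{\alpha}$. One could equivalently phrase the whole thing as a single application of Lemma~\ref{inhomogeneous transference} with $N=2^{\ell}$ and $C=2^{-\ell m/n}\min_{i}\varepsilon^{1/v_{i}}$.
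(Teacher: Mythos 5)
Your argument is correct and is essentially the paper's, which just cites Lemma~\ref{inhomogeneous transference} with $C=\varepsilon 2^{-\ell m/n}$ and $N=2^{\ell}$ — you have in effect re-run the proof of that lemma inline, with the $\varepsilon$ distributed per form as $\varepsilon^{-1}2^{\ell v_i m/n}$ rather than through a single $|\cdot|_v$-bound. If anything your version is slightly more careful: by putting $\varepsilon^{-1}$ (rather than $\varepsilon^{-v_i}$) in front of the $i$th form, the emptiness hypothesis from $\ell\in L_{\alpha}(v,A,\varepsilon)$ matches the Cassels hypothesis exactly, whereas the paper's choice $C=\varepsilon 2^{-\ell m/n}$ gives boxes $|A_i\cdot\bq-p_i|<\varepsilon^{v_i}2^{-\ell v_i m/n}$ and so needs a harmless adjustment of $\varepsilon$ (or of $c_2$) when some $v_i<1$.
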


 \begin{proof}
     Note that since $A\in Sing_{\alpha}(v)^{c}$, there exists $\varepsilon>0$ such that the set $L_{\alpha}(v,A,\varepsilon)$ has infinite cardinality by Lemma~\ref{v-sing equivalency}. Now take $C=\varepsilon 2^{-\ell\frac{m}{n}}$ and $N=2^{\ell}$ as in Lemma~\ref{inhomogeneous transference}.
 \end{proof}

Observe that Corollary~\ref{twisted_dirichlet} implies that $A\bq ({\rm mod} 1)$ is dense in $[0,1]^{m}$, and so, by Kronecker's Theorem (see for example \cite[Chapter III Theorem IV]{Cassels}), the subgroup $G( ^{t}A):= ^{t}A\Z^{m} + \Z^{n} \subset \R^{n}$ has maximal rank $n+m$ over $\Z$. This allows us to use the following result.
\begin{lemma}[{\cite[Proposition 3.5]{Kim23}}] \label{weyl kim}
Suppose $A\in Sing_{\alpha}(v)^{c}$. Then for any ball $B\subset [0,1]^{n}$
\begin{equation*}
    \frac{\#\{A\bq \in B : |\bq|_{\alpha}\leq N\}}{\#\{\bq \in \Z^{n}: |\bq|_{\alpha}\leq N\}}\to \lambda_{n}(B) \quad \text{ as } N\to \infty\, .
\end{equation*}
\end{lemma}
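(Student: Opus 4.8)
The plan is to deduce the equidistribution statement of Lemma~\ref{weyl kim} from classical Weyl-type equidistribution, using the density/maximal-rank observation recorded just before the statement. First I would recall that Corollary~\ref{twisted_dirichlet}, applied with $\mb$ ranging over a dense subset of $[0,1]^n$, shows that the orbit $\{A\bq \bmod 1 : \bq \in \Z^m\}$ is dense in $[0,1]^n$; by Kronecker's theorem this is equivalent to the transposed subgroup $G({}^tA) = {}^tA\,\Z^m + \Z^n$ having full rank $n+m$ over $\Z$, which rules out any nontrivial integer relation $\langle \mathbf{k}, A_i\cdot\bq\rangle$ being identically an integer. The heart of the proof is then a Weyl equidistribution argument: for each nonzero frequency vector $\mathbf{k}\in\Z^n\setminus\{0\}$, the exponential sum
\begin{equation*}
\frac{1}{\#\{\bq : |\bq|_\alpha \le N\}} \sum_{|\bq|_\alpha \le N} e\!\left(\mathbf{k}\cdot A\bq\right)
\end{equation*}
tends to $0$ as $N\to\infty$, because $\mathbf{k}\cdot A\bq = ({}^tA\,\mathbf{k})\cdot \bq$ is a linear form in $\bq$ with at least one irrational coefficient (thanks to the maximal-rank condition), and summing a nontrivial character over the expanding quasi-norm balls $\{|\bq|_\alpha \le N\}$ in $\Z^m$ produces cancellation. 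This is exactly the content quoted as \cite[Proposition 3.5]{Kim23}, so in the write-up I would simply cite it; were one to reprove it, the steps are: (i) reduce to counting over boxes $\prod [0, N^{\alpha_j}]$ via the definition of $|\cdot|_\alpha$, (ii) factor the exponential sum over the box as a product of one-dimensional geometric-type sums, and (iii) bound each such sum by $O(\min(N^{\alpha_j}, \|({}^tA\mathbf{k})_j\|^{-1}))$, noting at least one coordinate of ${}^tA\mathbf{k}$ is irrational so the corresponding factor is $o(N^{\alpha_j})$.

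Next I would pass from equidistribution against characters to equidistribution against indicator functions of balls by a standard approximation argument: given a ball $B\subset[0,1]^n$ and $\delta>0$, sandwich $\mathbf{1}_B$ between smooth functions $f^- \le \mathbf{1}_B \le f^+$ with $\int (f^+ - f^-) < \delta$, expand $f^\pm$ in Fourier series, apply the vanishing of nonzero Weyl sums term by term (controlling the tail by the rapid decay of the Fourier coefficients of the smooth $f^\pm$), and conclude that
\begin{equation*}
\limsup_{N\to\infty} \left| \frac{\#\{A\bq \in B : |\bq|_\alpha \le N\}}{\#\{\bq \in \Z^m : |\bq|_\alpha \le N\}} - \lambda_n(B)\right| \le \delta.
\end{equation*}
Letting $\delta\to 0$ gives the claim. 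One small bookkeeping point is that the denominator in the statement is written as $\#\{\bq\in\Z^n : |\bq|_\alpha \le N\}$ whereas $\bq$ genuinely ranges over $\Z^m$; I would treat this as a typo and work with $\#\{\bq\in\Z^m : |\bq|_\alpha \le N\} \asymp N^m$, which is the natural normalization and the one under which the argument closes.

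The main obstacle — and the only place the hypothesis $A \in Sing_\alpha(v)^c$ is genuinely used — is establishing that no nonzero $\mathbf{k}\in\Z^n$ yields ${}^tA\mathbf{k}\in\Z^m$, i.e. that the orbit is not trapped in a proper rational subtorus; everything else is soft harmonic analysis. This is precisely where Corollary~\ref{twisted_dirichlet} plus Kronecker's theorem do the work, as noted in the paragraph preceding the lemma. Since the statement is quoted verbatim from \cite[Proposition 3.5]{Kim23} and Kim's proof in the unweighted case $\alpha = (1,\dots,1)$ adapts to the weighted quasi-norm $|\cdot|_\alpha$ with only cosmetic changes (replacing cubes by the boxes $\prod[0,N^{\alpha_j}]$), in the paper I would not reprove it but cite it directly, having first verified that the non-singularity hypothesis supplies the required full-rank condition in the weighted setting exactly as above.
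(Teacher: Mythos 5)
Your proposal takes exactly the same route as the paper: the paper, like you, first invokes Corollary~\ref{twisted_dirichlet} together with Kronecker's theorem to obtain the maximal-rank (equivalently, no nonzero $\mathbf{k}\in\Z^n$ with ${}^tA\mathbf{k}\in\Z^m$) condition, and then simply cites Kim's Proposition 3.5, which is the Weyl exponential-sum argument you sketch, noting only the cosmetic change that the summation in each coordinate $q_j$ runs over $[-N^{\alpha_j},N^{\alpha_j}]$ and is averaged accordingly. You also correctly flag the $\Z^n$ versus $\Z^m$ typo in the denominator.
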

The proof of this Lemma follows in exactly the same way as \cite[Proposition 3.5]{Kim23}, the only difference being in the latter stages of the proof, where the summation on $q_{1}$ ranges over $-N^{\alpha_{1}}$ to $N^{\alpha_{1}}$ and is averaged over $N^{\alpha_{1}}$.

 \subsection{Weighted ubiquitous systems}\label{weighted ubiquity sect}
 In this section we give the definition of local ubiquity for rectangles as given in \cite{KW23}. This definition is a generalisation of ubiquity for rectangles as found in \cite{WW19}, which is in turn a generalisation of local ubiquity for balls introduced in \cite{BDV}. For brevity, we will state the results of \cite{KW23,WW19} in the special setting of $n$-dimensional real space. We will also assume that each resonant set, see definition below, is a finite collection of points and so we can omit the notion of $\kappa$-scaling. See \cite{KW23, WW19} for the full statements. \par 
  Consider the product space $(\R^{n},|\cdot|_{\infty},\lambda_{n})$, where $ |\cdot|_{\infty}=\max_{1 \leq i \leq n}|\cdot|$. For any $x \in \R^{n}$ and $r \in \R_{+}$ define the open ball
\begin{equation*}
B(x,r)=\left\{ y \in \R^{n}: |x-y|_{\infty}< r \right\}=\prod_{i=1}^{n}B_{1}(x_{i},r),
\end{equation*}
where $B_{1}$ are the usual open intervals with centre $x_{i}$ and diameter $2r$ in $\R$. Let $J$ be a countably infinite index set, and $\beta: J \to \R_{+}$, $\alpha \mapsto \beta_{\alpha}$ a positive function satisfying the condition that for any $N \in \N$
\begin{equation*}
\# \left\{ \alpha \in J: \beta_{\alpha} < N \right\} < \infty.
\end{equation*}
 Let $l_{k},u_{k}$ be two sequences in $\R_{+}$ such that $u_{k} \geq l_{k}$ with $l_{k} \to \infty$ as $k \to \infty$. Define
\begin{equation*}
J_{k}= \{ \alpha \in J: l_{k} \leq \beta_{\alpha} \leq u_{k} \}.
\end{equation*}
Let $\rho=(\rho_{1}, \dots , \rho_{n})$ be an $n$-tuple of non-increasing functions $\rho_{i}: \R_{+} \to \R_{+}$ such that each $\rho_{i}(x) \to 0$ as $x\to \infty$. For each $1 \leq i \leq n$, let $( R_{\alpha,i})_{\alpha \in J}$ be a sequence of finite collections of points in $\R$.
The family of sets $( R_\alpha)_{\alpha\in J}$ where
\begin{equation*}
	R_{\alpha}=\prod_{i=1}^{n} R_{\alpha, i}, 
\end{equation*}
for each $ \alpha \in J$, are called \textit{resonant sets}.

Define
\begin{equation*}
\Delta(R_{\alpha},\rho(r))= \prod_{i=1}^{n}  \Delta_{i}(R_{\alpha,i},\rho_{i}(r)),
\end{equation*}
where for some set $Y\subset \R$ and $b \in \R_{+}$
\begin{equation*}
\Delta_{i}(Y,b)= \bigcup_{a \in Y}B_{1}(a,b)
\end{equation*}
is the union of balls in $\R$ of radius $b$ centred at all possible points in $Y$.

The following notion of ubiquity for rectangles can be found in \cite{KW23}.

\begin{definition}[Local ubiquitous system of rectangles]\rm
Call the pair $\big((R_{\alpha})_{\alpha \in J}, \beta\big)$ {\em a local ubiquitous system of rectangles with respect to $\rho$} if there exists a constant $c>0$ such that for any ball $B \subset X$ and all sufficiently large $k \in \N$
\begin{equation*}
\lambda_{n}\left( B \cap \bigcup_{\alpha \in J_{k}}\Delta(R_{\alpha}, \rho(u_{k})) \right) \geq c m(B).
\end{equation*}
\end{definition}
For $n$-tuple of approximation functions $\Psi=(\psi_{1},\dots, \psi_{n})$ with each $\psi_{i}:\R_{+} \to \R_{+}$ define
\begin{equation*}
    W(\Psi)=\left\{x \in \R^{n} : x \in \Delta\left(R_{\alpha},\Psi(\beta_{\alpha})\right) \, \text{ for infinitely many } \alpha \in J \right\}.
\end{equation*}

The following theorem, due to Kleinbock and Wang \cite{KW23}, provides the Lebesgue measure theory on $W(\Psi)$.

\begin{theorem} \label{KW ambient measure}
    Let $0<c<1$. A function $f$ is said to be $c$-regular with respect to a sequence $\{r_{i}\}_{i\in\N}$ if $f(r_{i+1}) \leq c f(r_{i})$ for all sufficiently large $i$. Let $W(\Psi)$ be defined as above and assume that $(\{R_{\alpha}\}_{\alpha \in J}, \beta)$ is a local ubiquitous systems of rectangles with respect to $\rho$. Suppose that
    \begin{enumerate}
        \item[\rm (I)] each $\psi_{i}$ is decreasing,
        \item[\rm(II)] for each $1\leq i \leq n$,  $\psi_{i}(r) \leq \rho_{i}(r)$ for all $r\in\R_{+}$ and $\rho_{i}(r)\to 0$ as $r\to \infty$,
        \item[\rm (III)] either $\rho_{i}$ is $c$-regular on $\{u_{k}\}_{k\in\N}$ for all $1\leq i \leq n$ or $\psi_{i}$ is $c$-regular on $\{u_{k}\}_{k\in\N}$ for all $1\leq i \leq n$ for some $0<c<1$.
    \end{enumerate}
    Then
    \begin{equation*}
        \lambda_{n}(W(\Psi))=\textit{full} \quad \text{\rm if } \quad \sum_{k=1}^{\infty}\prod_{i=1}^{n}\left(\frac{\psi_{i}(u_{k})}{\rho_{i}(u_{k})}\right)\, = \infty.
    \end{equation*}
\end{theorem}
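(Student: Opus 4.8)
The plan is to establish the \emph{uniform local estimate} that there is a constant $\kappa=\kappa(c,n)\in(0,1)$, independent of the ball, with
$\lambda_{n}\big(W(\Psi)\cap B\big)\ge\kappa\,\lambda_{n}(B)$ for every ball $B$. The full-measure conclusion then follows by the standard argument for $\limsup$ sets: since $W(\Psi)=\bigcap_{N\in\N}\bigcup_{\beta_\alpha\ge N}\Delta(R_\alpha,\Psi(\beta_\alpha))$, if $W(\Psi)\comp$ had positive measure then a Lebesgue density point of $W(\Psi)\comp$ would lie in a ball in which $W(\Psi)$ has density $<\kappa$, contradicting the local estimate. I will use repeatedly that, as the $\psi_i$ are non-increasing with $\psi_i\le\rho_i$, for $\beta_\alpha\le u_k$ one has $\psi_i(u_k)\le\psi_i(\beta_\alpha)$, hence $\Delta(R_\alpha,\Psi(u_k))\subseteq\Delta(R_\alpha,\Psi(\beta_\alpha))$; so it is enough to produce points lying in infinitely many of the ``frozen'' rectangles $\Delta(R_\alpha,\Psi(u_k))$ with $\alpha\in J_k$ and $k\to\infty$ (recall $\beta_\alpha\ge l_k\to\infty$ on $J_k$). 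Note that the analogous statement for the \emph{larger} sets $\Delta(R_\alpha,\rho(u_k))$ is immediate from local ubiquity together with the density argument above; the whole difficulty is to pass down to the much thinner sets $\Delta(R_\alpha,\Psi(u_k))$, and this is exactly where the divergence of $\sum_k\theta_k$, with $\theta_k:=\prod_{i=1}^{n}\psi_i(u_k)/\rho_i(u_k)\in(0,1]$, is used.

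The first ingredient is a single-scale harvesting lemma: \emph{for every ball $B$ and every sufficiently large $k$ there is a finite, pairwise disjoint family of axis-parallel boxes, each of the form $\prod_{i=1}^{n}B_1(a_i,\psi_i(u_k))$ with $(a_1,\dots,a_n)\in R_\alpha$ for some $\alpha\in J_k$, each contained in $B$, and of total measure at least $c\,3^{-n}\theta_k\,\lambda_n(B)$.} Indeed, local ubiquity gives $\lambda_n\big(B\cap\bigcup_{\alpha\in J_k}\Delta(R_\alpha,\rho(u_k))\big)\ge c\,\lambda_n(B)$; at the fixed scale $k$ the sets $\Delta(R_\alpha,\rho(u_k))$ are finite unions of congruent boxes with side lengths $2\rho_i(u_k)$, which for $k$ large are at most half the side of $B$; a greedy selection (any box meeting a previously chosen one lies in its $3$-dilate) extracts pairwise disjoint such boxes inside $B$ whose union has measure $\ge c\,3^{-n}\lambda_n(B)$; finally replacing each chosen $\rho$-box by the concentric box with sides $2\psi_i(u_k)$ preserves disjointness and multiplies the total measure by $\theta_k$.

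The second ingredient is a Cantor-type construction of a subset $K\subseteq W(\Psi)\cap B$. Choose $\delta_t=2^{-t-1}$, and partition $\N$ into consecutive blocks $S_1<S_2<\cdots$ — possible since $\sum_k\theta_k=\infty$ — so that each $\sum_{k\in S_t}\theta_k$ is as large as we wish. Set $E_0=B$. Given $E_{t-1}$, a finite disjoint union of boxes, we process each component $P$ separately: starting from $H_0=\varnothing$, having produced a finite disjoint union $H_r$ of ``$\Psi$-boxes'' (as in the lemma) at scales in $S_t$, cover all but a proportion $\eta$ of $P\setminus H_r$ by finitely many pairwise disjoint balls contained in $P\setminus H_r$, apply the harvesting lemma inside each of these balls at one common scale $k_{r+1}\in S_t$ chosen large enough to serve them all (so, in particular, the new boxes lie inside those balls, hence inside $P\setminus H_r$), and adjoin all the new boxes to $H_r$ to get $H_{r+1}$. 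Then $\lambda_n(P\setminus H_{r+1})\le\big(1-c\,3^{-n}(1-\eta)\theta_{k_{r+1}}\big)\lambda_n(P\setminus H_r)$, so after running through a long enough initial segment of $S_t$ we reach $H=H_R$ with $\lambda_n(P\setminus H)\le\exp\!\big(-c\,3^{-n}(1-\eta)\sum_{r}\theta_{k_r}\big)\lambda_n(P)\le\delta_t\,\lambda_n(P)$. Letting $E_t$ be the union of these $H$ over all components $P$ of $E_{t-1}$, we obtain a finite disjoint union of $\Psi$-boxes with $\lambda_n(E_t)\ge(1-\delta_t)\lambda_n(E_{t-1})$; hence $K:=\bigcap_{t\ge1}E_t$ satisfies $\lambda_n(K)\ge\prod_{t\ge1}(1-\delta_t)\,\lambda_n(B)=:\kappa\,\lambda_n(B)>0$. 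Every $x\in K$ lies, for each $t$, in some box $\prod_i B_1(a_i,\psi_i(u_k))\subseteq\Delta(R_\alpha,\Psi(u_k))\subseteq\Delta(R_\alpha,\Psi(\beta_\alpha))$ with $k\in S_t$, $\alpha\in J_k$; since $\beta_\alpha\ge l_k\to\infty$, the point $x$ lies in infinitely many $\Delta(R_\alpha,\Psi(\beta_\alpha))$, i.e.\ $x\in W(\Psi)$.

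The step I expect to be genuinely delicate is the inner iteration: as $H_r$ grows, the remaining region $P\setminus H_r$ becomes geometrically complicated, so the balls used to exhaust it — and therefore the scales $k_{r+1}$ forced to be ``large enough'' — may be driven far out along $S_t$, and one must ensure this does not outrun the divergence, i.e.\ that $\sum_r\theta_{k_r}$ can still be made as large as needed while the $H_r$ remain nested inside the boxes of the previous level. Controlling this is where the hypotheses enter: monotonicity (I)–(II) lets one pass freely between the values of $\psi_i$ and $\rho_i$ at $\beta_\alpha$ and at $u_k$ (and ensures $\theta_k\le1$), while the $c$-regularity hypothesis (III) keeps the box sizes $\rho_i(u_k)$, hence the ratios $\theta_k$, comparable across consecutive scales and so prevents precisely this blow-up. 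An alternative to reconstructing all of this is to invoke the general ubiquity-for-rectangles framework of \cite{WW19,KW23}, in which this construction is carried out in full generality.
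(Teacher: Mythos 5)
The paper does not prove this theorem: it is stated and used as a black box, attributed to Kleinbock and Wang, with an explicit pointer to \cite{KW23} (and, for the underlying ubiquity-for-rectangles framework, \cite{WW19}). So there is no in-paper proof to compare against, and your closing remark --- that one can ``invoke the general ubiquity-for-rectangles framework of \cite{WW19,KW23}'' --- is in fact exactly what the authors do.

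As for the sketch you offer in place of that citation: the top-level strategy (prove a uniform local estimate $\lambda_n(W(\Psi)\cap B)\ge\kappa\lambda_n(B)$ and then conclude full measure via a Lebesgue density point of the complement) is the standard and correct one, and your single-scale harvesting lemma --- Vitali/greedy extraction of disjoint congruent $\rho$-boxes from the local ubiquity estimate, then concentric shrinking to $\Psi$-boxes, costing a factor $3^{-n}$ and then $\theta_k$ --- is a sound first step (modulo the boundary fix of working in a concentric half-ball so the selected boxes land inside $B$). The problem is the inner iteration, and you identify it yourself without closing it. At step $r$ you must cover most of $P\setminus H_r$ by balls inside $P\setminus H_r$ and then pick a common scale $k_{r+1}\in S_t$ small enough to harvest inside the smallest such ball. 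Since $H_r$ is a union of $\Psi$-boxes, whose sides are as small as $\psi_i(u_{k_r})\le\rho_i(u_{k_r})$, the inradius of $P\setminus H_r$ can be far smaller than $\rho_i(u_{k_r})$, so $k_{r+1}$ may be forced far past $k_r+1$. Appealing to $c$-regularity does not rescue this: $c$-regularity controls the decay of $\rho_i$ (or $\psi_i$) from one $u_k$ to the next, but it does not bound how many indices you are forced to skip, and divergence of $\sum_k\theta_k$ does not pass to a sparse adaptively-chosen subsequence $\{k_r\}$ (one can easily make $\theta_k$ concentrate on the skipped indices). So as written there is a genuine gap: you have not shown that $\sum_r\theta_{k_r}$ can be made $\gtrsim\log(1/\delta_t)$.

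The way the established proofs avoid this is structurally different from your inner loop. Rather than trying to exhaust all but a $\delta_t$-proportion of each component $P$ in a single pass over a block of scales --- which forces you to keep re-covering the increasingly ragged set $P\setminus H_r$ --- one builds the Cantor set out of nested $\rho$-boxes (of uniform side at each generation), so that passing from one generation to the next only ever requires fitting $\rho$-boxes at scale $k+O(1)$ inside $\rho$-boxes at scale $k$, a step that $c$-regularity does control. The $\Psi$-boxes are carried along as concentric sub-boxes capturing a $\theta_k$-fraction at each generation, and the divergence $\sum_k\theta_k=\infty$ is then converted into ``a.e.\ point of the Cantor set enters infinitely many $\Psi$-boxes'' via a second Borel--Cantelli / quasi-independence-on-average argument in the spirit of \cite{BDV}, not via a geometric exhaustion to density $1-\delta_t$. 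That is the mechanism your sketch is missing; without it, the claim that the $H_r$ can be pushed to fill $P$ up to $\delta_t$ using scales whose $\theta$-values still sum large is unsupported.
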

Here, by full we mean that the complement is a Lebesgue nullset. \par 
For the Hausdorff theory we have the following theorem due to Wang and Wu \cite{WW19}.

\begin{theorem} \label{MTPRR}
Let $W(\Psi)$ be defined as above and assume that $(\{R_{\alpha}\}_{\alpha \in J}, \beta)$ is a local ubiquitous systems of rectangles with respect to $\rho=(\rho^{a_{1}}, \dots ,\rho^{a_{n}})$ for some function $\rho:\R_{+} \to \R_{+}$ and $(a_{1},\dots, a_{n}) \in \R^{n}_{+}$ with $\rho(N)\to 0$ as $N\to \infty$. Then, for $\Psi=(\rho^{a_{1}+t_{1}},\dots, \rho^{a_{n}+t_{n}})$ for some $\textbf{t}=(t_{1}, \dots, t_{n}) \in \R^{n}_{+}$
\begin{equation*}
\dimh W(\Psi) \geq \min_{A_{i} \in A} \left\{ \sum_{j \in \cK_{1}} 1+ \sum_{j \in \cK_{2}} 1+ \frac{\sum_{j \in \cK_{3}}a_{j}-\sum_{j \in \cK_{2}}t_{j}}{A_{i}} \right\}=s,
\end{equation*}
where $A=\{ a_{i}, a_{i}+t_{i} , 1 \leq i \leq n \}$ and $\cK_{1},\cK_{2},\cK_{3}$ are a partition of $\{1, \dots, n\}$ defined as
\begin{equation*}
 \cK_{1}=\{ j:a_{j} \geq A_{i}\}, \quad \cK_{2}=\{j: a_{j}+t_{j} \leq A_{i} \} \backslash \cK_{1}, \quad \cK_{3}=\{1, \dots n\} \backslash (\cK_{1} \cup \cK_{2}).
 \end{equation*}
 Furthermore, for any ball $B \subset \R^{n}$
  \begin{equation*} \label{MTPRR_measure}
\cH^{s}(B \cap W(\Psi))=\cH^{s}(B).
\end{equation*}
 \end{theorem}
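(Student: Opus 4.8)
The statement is the mass transference principle for rectangles of \cite{WW19}, specialised to $\R^{n}$ with point-like resonant sets, and the natural proof is a Cantor-set construction inside an arbitrary ball followed by the mass distribution principle. Write $\rho_{k}:=\rho(u_{k})$, so that for $\alpha\in J_{k}$ the ubiquitous rectangle $\Delta(R_{\alpha},\rho(u_{k}))$ is a translate of the fixed box $\prod_{i=1}^{n}[-\rho_{k}^{a_{i}},\rho_{k}^{a_{i}}]$, and the target rectangle $\Delta(R_{\alpha},\Psi(u_{k}))$ is the concentric translate of $\prod_{i=1}^{n}[-\rho_{k}^{a_{i}+t_{i}},\rho_{k}^{a_{i}+t_{i}}]$; since $t_{i}>0$ and $\rho_{k}\to0$ the latter lies inside the former, and since each $\psi_{i}$ is non-increasing and $\beta_{\alpha}\le u_{k}$ it lies inside $\Delta(R_{\alpha},\Psi(\beta_{\alpha}))$ as well. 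As $W(\Psi)=\bigcap_{N}\bigcup_{\beta_{\alpha}>N}\Delta(R_{\alpha},\Psi(\beta_{\alpha}))$ is a $\limsup$ set and (recall $t_{i}>0$, so) $s<n$, one has $\cH^{s}(B)=\infty$ for every ball, and it suffices to prove a uniform lower bound $\cH^{s}(B\cap W(\Psi))\ge\kappa>0$ with $\kappa$ depending only on the ubiquity data: running the construction in arbitrarily many disjoint sub-balls of $B$ then forces $\cH^{s}(B\cap W(\Psi))=\infty=\cH^{s}(B)$, and $\dimh W(\Psi)\ge s$ is immediate.

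\emph{The Cantor construction.} Fix $B$ and choose a very lacunary sequence of levels $k_{1}<k_{2}<\dots$ inductively. Suppose $\mathbf K_{l-1}$ has been built as a finite disjoint union of congruent ``cores'', each a translate of $\prod_{i}[-\rho_{k_{l-1}}^{a_{i}+t_{i}},\rho_{k_{l-1}}^{a_{i}+t_{i}}]$ (with $\mathbf K_{0}=B$). For a component $E$ of $\mathbf K_{l-1}$, pick $k_{l}$ so large that $\rho_{k_{l}}^{a_{i}}$ is tiny compared with the $i$-th side of $E$ for every $i$; covering $E$ by comparably-sized cubes and applying local ubiquity at level $k_{l}$ in each yields a family of congruent rectangles $\Delta(R_{\alpha},\rho(u_{k_{l}}))$, $\alpha\in J_{k_{l}}$, covering a proportion $\gg c$ of $E$, from which a Vitali selection (valid with a dimensional constant, as these are translates of one box) extracts a pairwise disjoint subfamily still covering a proportion $\gg c$ of $E$, of cardinality $\asymp\prod_{i}\rho_{k_{l-1}}^{a_{i}+t_{i}}\big/\prod_{i}\rho_{k_{l}}^{a_{i}}\to\infty$. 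Keeping inside each selected rectangle its concentric target rectangle $\Delta(R_{\alpha},\Psi(u_{k_{l}}))$ gives the components of $\mathbf K_{l}$ inside $E$; let $\mathbf K_{l}$ be the union of these over all $E$. Then $\mathbf K:=\bigcap_{l}\mathbf K_{l}$ is a nonempty compact subset of $B$ and each of its points lies in $\Delta(R_{\alpha},\Psi(\beta_{\alpha}))$ for one $\alpha$ per generation, hence $\mathbf K\subseteq B\cap W(\Psi)$.

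\emph{The measure and the Frostman bound.} Let $\mu$ be the probability measure on $\mathbf K$ obtained by splitting the mass of every parent uniformly among its children; all components of a given generation then carry equal $\mu$-mass. The core estimate is
\begin{equation*}
\mu\bigl(B(x,r)\bigr)\;\ll\;r^{s}\qquad\text{for all }x\text{ and all small }r>0 .
\end{equation*}
To prove it, given $r$ one locates the generation $l$ at which $r$ falls among the scales $\{\rho_{k_{l}}^{a_{i}},\rho_{k_{l}}^{a_{i}+t_{i}}\}_{i}$, partitions the coordinates into $\cK_{1},\cK_{2},\cK_{3}$ according to where $r$ sits (coordinates already finer than $r$ at the ubiquitous scale; coordinates still coarser than $r$ at the target scale; the intermediate ones), and counts the generation-$l$ cores meeting $B(x,r)$ coordinate by coordinate — $O(1)$ of the disjoint level-$k_{l}$ boxes in the coarse directions, at most $\ll r/\rho_{k_{l}}^{a_{i}}$ in the fine ones by disjointness — combining this with the mass on a single core. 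The lacunarity of the levels makes the contributions of all other generations to one ball negligible, and optimising the exponent that results over the admissible thresholds $A_{i}\in A$ produces exactly $\sum_{j\in\cK_{1}}1+\sum_{j\in\cK_{2}}1+\frac{\sum_{j\in\cK_{3}}a_{j}-\sum_{j\in\cK_{2}}t_{j}}{A_{i}}$, minimised to $s$. The mass distribution principle then gives $\cH^{s}(\mathbf K)\gg1$ with implied constant depending only on the ubiquity data, which is the required uniform lower bound.

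\emph{Main obstacle.} The one genuinely difficult step is the Frostman bound $\mu(B(x,r))\ll r^{s}$. All the trouble comes from the anisotropy: a single ball of radius $r$ meets a given generation at a different relative scale in each coordinate, so the count of cores it meets must be assembled from $n$ separate one-dimensional counts, and one must verify that the worst admissible ``reference exponent'' $r\asymp\rho_{k_{l}}^{A_{i}}$, $A_{i}\in A$, is precisely the one yielding the exponent $s$; exploiting the disjointness of the level-$k_{l}$ boxes directionally — not merely as sets — to bound the fine-direction counts is the crux of this. By comparison, the Vitali selection, the lacunary choice of levels, and the exhaustion argument upgrading the uniform lower bound to $\cH^{s}(B\cap W(\Psi))=\cH^{s}(B)$ are routine.
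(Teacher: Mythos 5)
This theorem is not proved in the paper at all: it is quoted verbatim (in a simplified, point-resonant-set form) from Wang and Wu \cite{WW19}, just as Theorem~\ref{KW ambient measure} is quoted from \cite{KW23}. So there is no in-paper argument to compare against; what you have written is an outline of the proof that lives in the cited reference. Your outline does follow the genuine strategy of \cite{WW19} (and of the earlier ball case in \cite{BDV}): reduce to a uniform positive lower bound on $\cH^{s}$ of a Cantor subset of $B\cap W(\Psi)$ built by alternating local ubiquity, a Vitali-type disjointification of the congruent ubiquitous boxes, and shrinking to the target rectangles; then apply the mass distribution principle. The reduction from "uniform lower bound on every ball" to $\cH^{s}(B\cap W(\Psi))=\cH^{s}(B)=\infty$ (using $s<n$ when some $t_{i}>0$) is also the standard one.

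That said, as a proof it is incomplete exactly where you say it is. The Frostman estimate $\mu(B(x,r))\ll r^{s}$ is the entire content of \cite{WW19}, and two points in your description of it need real work. First, "splitting the mass of every parent uniformly among its children" is only adequate if you also establish that, inside each parent, the selected children are spread out comparably to Lebesgue measure; this is what lets you say $\mu(B(x,r)\cap E)\ll\mu(E)\prod_{i}\min\{1,r/\rho_{k_{l}}^{a_{i}+t_{i}}\}$ when $r$ is intermediate between the side lengths of a single generation, and it is needed precisely because a ball of radius $r$ can be engulfed by one anisotropic child in some coordinates while crossing many in others. (In \cite{WW19} this is handled by further subdividing each rectangle into cubes of its shortest side length before distributing mass.) Second, the case analysis over which element $A_{i}\in A$ the radius $r$ is comparable to, and the verification that each case yields exactly the exponent $\sum_{\cK_{1}}1+\sum_{\cK_{2}}1+(\sum_{\cK_{3}}a_{j}-\sum_{\cK_{2}}t_{j})/A_{i}$, is a nontrivial computation that your sketch asserts rather than performs. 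Since the present paper uses this theorem as a black box, the appropriate resolution is to cite \cite{WW19} rather than to reprove it; if you do want a self-contained proof, the two items above are what must be filled in.
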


\section{Proof of main results}
The main results will be proven by showing that we have a weighted ubiquitous system for a certain setup. Given this statement, we can use the theorems of Kleinbock \& Wang (Theorem~\ref{KW ambient measure}), and Wang \& Wu (Theorem~\ref{MTPRR}) to obtain the divergence case of Theorem~\ref{twisted statement} and the lower bound of Theorem ~\ref{dim result}. The corresponding convergence and upper bound statements use standard covering arguments, but for completeness, we include them in their relevant sections.
\subsection{Proof of the weighted ubiquity statement} \label{twisted ubiquity and proof}

Choose any $A\in Sing_{\alpha}(v)^{c}$ and fix such a choice for the remainder of this section. Let
\begin{align*}
&J=\left\{\bq \in \Z^{m}\right\} \quad \beta:J\to \R_{+},  \bq \mapsto \beta_{\bq}=|\bq|_{\alpha}, \\
&R_{\bq,i}= \{A_{i}\cdot \bq\}  \quad R_{\bq}=\{A\bq\}\, ,
\end{align*}
where $\{X\}$ denotes the vector composing of the fractional part in each coordinate. Define the sequences
\begin{equation*}
    l_{i}=c_{3}u_{i} \text{ for constant } 0<c_{3}<1 \text{ dependent on $\varepsilon$}\, , \quad u_{i}=c_{2}2^{\ell_{i}}\\
\end{equation*}
where $\varepsilon>0$ is the real number such that the set $L_{\alpha}(v,A,\varepsilon)$ is infinite, and $\{\ell_{i}\}_{i\in\N}$ corresponds to the ordered sequence of such integers ($c_{2}$ is the constant given in Corollary~\ref{twisted_dirichlet}).

\begin{proposition}
    Let each
    \begin{equation*}
        \rho_{i}(r)=\varepsilon c_{2}^{1+v_{i}\frac{m}{n}} r^{-v_{i}\frac{m}{n}} \quad (1\leq i \leq n)\, .
    \end{equation*}
    Then for any ball $B\subset[0,1]^{n}$
    \begin{equation*}
        \lambda_{n}\left(B\cap \bigcup_{\bq \in J_{k}}\Delta\left(R_{\bq},\rho(u_{k})\right) \right) \geq \frac{1}{2}\lambda_{n}(B),
    \end{equation*}
    for all sufficiently large $k \in \N$.
\end{proposition}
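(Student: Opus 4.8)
The plan is to bound the left-hand side below in two moves. First, a \emph{covering} estimate: the resonant neighbourhoods indexed by all $\bq\in\Z^{m}$ with $|\bq|_{\alpha}<u_{k}$ cover essentially all of $B$. Second, an \emph{overlap} estimate: the neighbourhoods indexed by the $\bq$ with $|\bq|_{\alpha}<l_{k}$ occupy at most a small proportion of $B$. Since $J_{k}=\{\bq:l_{k}\le|\bq|_{\alpha}\le u_{k}\}$, subtracting the second bound from the first leaves the required lower bound. Throughout, $\varepsilon$ is the fixed number (from the section setup and Lemma~\ref{v-sing equivalency}) for which $L_{\alpha}(v,A,\varepsilon)$ is infinite, and $c_{2}=c_{2}(\varepsilon)$ is the constant of Corollary~\ref{twisted_dirichlet}.

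\textbf{Covering step.} Fix $k$, put $\ell=\ell_{k}\in L_{\alpha}(v,A,\varepsilon)$, so $u_{k}=c_{2}2^{\ell}$ and $2^{-\ell v_{i}m/n}=c_{2}^{v_{i}m/n}u_{k}^{-v_{i}m/n}$; hence $\varepsilon c_{2}2^{-\ell v_{i}m/n}=\rho_{i}(u_{k})$ for every $i$. By Corollary~\ref{twisted_dirichlet}, for each $\mb\in\R^{n}$ there is $(\bq,\bp)\in\Z^{m+n}$ with $|A_{i}\cdot\bq-p_{i}-b_{i}|<\rho_{i}(u_{k})$ for all $i$ and $|\bq|_{\alpha}<u_{k}$. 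Writing $A_{i}\cdot\bq-p_{i}=\{A_{i}\cdot\bq\}-m_{i}$ with $m_{i}=p_{i}-\floor{A_{i}\cdot\bq}\in\Z$, the inequality puts $m_{i}$ within $\rho_{i}(u_{k})$ of $\{A_{i}\cdot\bq\}-b_{i}$, and when $b_{i}\in[\rho_{i}(u_{k}),1-\rho_{i}(u_{k})]$ this quantity lies in $(-1,1)$, forcing $m_{i}=0$; thus $|\{A_{i}\cdot\bq\}-b_{i}|<\rho_{i}(u_{k})$, i.e. $\mb\in\Delta(R_{\bq},\rho(u_{k}))$ with $|\bq|_{\alpha}<u_{k}$. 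The set of $\mb\in[0,1]^{n}$ failing the boundary restriction $\mb\in\prod_{i}[\rho_{i}(u_{k}),1-\rho_{i}(u_{k})]$ has measure at most $2n\max_{i}\rho_{i}(u_{k})\to0$, so for all $k$ large (depending on $B$),
\begin{equation*}
\lambda_{n}\Bigl(B\cap\bigcup_{|\bq|_{\alpha}<u_{k}}\Delta(R_{\bq},\rho(u_{k}))\Bigr)\ \ge\ \tfrac{9}{10}\lambda_{n}(B).
\end{equation*}

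\textbf{Overlap step.} Here $\lambda_{n}(\Delta(R_{\bq},\rho(u_{k})))=\prod_{i=1}^{n}2\rho_{i}(u_{k})=2^{n}\varepsilon^{n}c_{2}^{n+m}u_{k}^{-m}$, using $\sum_{i}v_{i}=n$. If $\Delta(R_{\bq},\rho(u_{k}))$ meets $B$ then $\{A\bq\}$ lies in the $(\max_{i}\rho_{i}(u_{k}))$-neighbourhood $B^{\ast}$ of $B$, with $\lambda_{n}(B^{\ast})\le\lambda_{n}(B)+2n\max_{i}\rho_{i}(u_{k})$. Since $l_{k}=c_{3}u_{k}\to\infty$ and $B^{\ast}$ shrinks to $\overline{B}$, Lemma~\ref{weyl kim} (applied to a fixed ball slightly larger than $B$), together with $\#\{\bq\in\Z^{m}:|\bq|_{\alpha}\le N\}=\prod_{i}(2\floor{N^{\alpha_{i}}}+1)=2^{m}N^{m}(1+o(1))$, gives $\#\{\bq\in\Z^{m}:|\bq|_{\alpha}\le l_{k},\ \{A\bq\}\in B^{\ast}\}\le\lambda_{n}(B^{\ast})\,2^{m}l_{k}^{m}(1+o(1))$. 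Summing the volumes of these neighbourhoods and using $l_{k}^{m}u_{k}^{-m}=c_{3}^{m}$,
\begin{equation*}
\lambda_{n}\Bigl(B\cap\bigcup_{|\bq|_{\alpha}<l_{k}}\Delta(R_{\bq},\rho(u_{k}))\Bigr)\ \le\ \lambda_{n}(B^{\ast})\,2^{m+n}\varepsilon^{n}c_{2}^{n+m}c_{3}^{m}(1+o(1)).
\end{equation*}
As $c_{2}$ is an explicit function of $\varepsilon$ and the weights, we now choose $c_{3}=c_{3}(\varepsilon)$ small enough that $2^{m+n}\varepsilon^{n}c_{2}^{n+m}c_{3}^{m}\le\tfrac{1}{16}$; then for $k$ large (so $1+o(1)\le2$ and, $B$ being fixed, $\lambda_{n}(B^{\ast})\le2\lambda_{n}(B)$) the above is $\le\tfrac14\lambda_{n}(B)$. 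Finally each $\bq$ with $|\bq|_{\alpha}<u_{k}$ satisfies either $|\bq|_{\alpha}<l_{k}$ or $\bq\in J_{k}$, so the first union lies in the union of the other two and
\begin{equation*}
\lambda_{n}\Bigl(B\cap\bigcup_{\bq\in J_{k}}\Delta(R_{\bq},\rho(u_{k}))\Bigr)\ \ge\ \tfrac{9}{10}\lambda_{n}(B)-\tfrac14\lambda_{n}(B)\ \ge\ \tfrac12\lambda_{n}(B)
\end{equation*}
for all sufficiently large $k$.

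The crux is the overlap step. A bound on the \emph{total} measure $\sum_{|\bq|_{\alpha}<l_{k}}\lambda_{n}(\Delta(R_{\bq},\rho(u_{k})))$ alone produces only a small absolute constant, not a small multiple of $\lambda_{n}(B)$, which would establish the ubiquity inequality merely for balls of bounded-below measure; making the bound scale with $\lambda_{n}(B)$ is precisely where the equidistribution of $\{A\bq\}$ in $[0,1]^{n}$ (Lemma~\ref{weyl kim}, available because $A\notin Sing_{\alpha}(v)$) enters, and then shrinking $c_{3}$ closes the argument. The treatment of $\mb$ within $\rho(u_{k})$ of $\partial[0,1]^{n}$ in the covering step is a harmless technicality, since $\rho(u_{k})\to0$ and $B$ is fixed.
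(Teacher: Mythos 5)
Your proof is correct and follows essentially the same strategy as the paper's: Corollary~\ref{twisted_dirichlet} gives the covering by neighbourhoods indexed by $|\bq|_{\alpha}<u_{k}$, Lemma~\ref{weyl kim} bounds the contribution from $|\bq|_{\alpha}<l_{k}$ proportionally to $\lambda_{n}(B)$, and $c_{3}$ is then chosen small to close the gap. The only difference is cosmetic — you spell out the boundary/fractional-part technicality that reduces the covering to a $\geq(1-o(1))\lambda_{n}(B)$ estimate, whereas the paper asserts exact equality; both lead to the same conclusion.
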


 \begin{proof}
     Fix $B \subset [0,1]^{n}$. By Corollary~\ref{twisted_dirichlet} we have that for any $\ell_{j} \in L_{\alpha}(v,A,\varepsilon)$
     \begin{equation*}
         \lambda_{n}\left( B\cap \bigcup_{\bq\in\Z^{m} : |\bq|_{\alpha}\leq c_{2}2^{\ell_{k}}}\Delta\left(R_{\bq}, \left(\varepsilon c_{2}2^{-\ell_{k}v_{1}\frac{m}{n}}, \dots, \varepsilon c_{2}2^{-\ell_{k}v_{n}\frac{m}{n}}\right) \right) \right)=\lambda_{n}(B). 
     \end{equation*}
     Note that each
     \begin{equation*}
         \varepsilon c_{2}2^{-\ell_{k}v_{i}\frac{m}{n}}=\varepsilon c_{2}^{1+v_{i}\frac{m}{n}} \left(c_{2}2^{\ell_{j}}\right)^{-v_{i}\frac{m}{n}}=\rho_{i}(u_{k}) \quad (1\leq i \leq n).
     \end{equation*}
     This allows us to deduce that for any $k \in \N$
     \begin{align*}
         \lambda_{n}(B) &\leq \lambda_{n}\left( B\cap \bigcup_{\bq\in\Z^{m} : |\bq|_{\alpha}\leq l_{k}}\Delta\left(R_{\bq}, \rho(u_{k})\right) \right)+\lambda_{n}\left( B\cap \bigcup_{\bq\in\Z^{m} : \bq \in J_{k}}\Delta\left(R_{\bq}, \rho(u_{k})\right) \right)\, ,
     \end{align*}
    and so the proof is complete in showing that
    \begin{equation}\label{twisted last bit}
        \lambda_{n}\left( B\cap \bigcup_{\bq\in\Z^{m} : |\bq|_{\alpha}\leq l_{k}}\Delta\left(R_{\bq}, \rho(u_{k})\right) \right) \leq \frac{1}{2}\lambda_{n}(B).
    \end{equation}
     To show \eqref{twisted last bit} we note, by Lemma~\ref{weyl kim}, that there exists sufficiently large $k_{B}\in\N$ such that for all $k>k_{B}$
     \begin{equation*}
         \#\{A\bq \in 2B: |\bq|_{\alpha}\leq l_{k}\} \leq 2 l_{k}^{m}\lambda_{n}(B).
     \end{equation*}
     Hence, taking $k \in \N$ sufficiently large such that $\max_{1\leq i \leq n} \rho_{i}(u_{k})<r(B)$, we have that
     \begin{align*}
         \lambda_{n}\left( B\cap \bigcup_{\bq\in\Z^{m} : |\bq|_{\alpha}\leq l_{k}}\Delta\left(R_{\bq}, \rho(u_{k})\right) \right) &\leq 2l_{k}^{m}\lambda_{n}(B)2^{n}\prod_{i=1}^{n}\rho_{i}(u_{k})\\
         & =2^{n+1}\varepsilon^{n}c_{3}^{m}c_{2}^{n+m}\lambda_{n}(B)\\
         & \leq  \frac{1}{2}\lambda_{n}(B),
     \end{align*}
     where the last inequality follows since we can choose the constant $c_{3}$ so that $$c_{3}<\left(2^{-(n+2)}\varepsilon^{-n}c_{2}^{-(n+m)}\right)^{1/m}$$ independent of $B$ and $k$.
 \end{proof}

\subsection{Proof of Theorem~\ref{twisted statement}}
To prove Theorem~\ref{twisted statement} we apply Theorem~\ref{KW ambient measure}. The bulk of the proof (the weighted ubiquity statement) has been proven in the previous section. It remains to verify that the choice of $\rho$ function satisfies the conditions of Theorem~\ref{KW ambient measure} and that the summations given in Theorem~\ref{KW ambient measure} and Theorem~\ref{twisted statement} are equivalent.\par 
By the conditions of Theorem~\ref{twisted statement}, each $\psi_{i}$ is monotonically decreasing. Furthermore each $\rho_{i}(r)\geq \psi_{i}(r)$. Note that each $\psi_{i}$ may need to be multiplied by some constant to make this strictly true, but as shown in for example \cite[Lemma 5.7]{BDGW23}, this would not change the Lebesgue measure of $W_{A}(\Psi)$. Observe that each
\begin{equation*}
    \rho_{i}(u_{k+1})=\varepsilon c_{2}^{1+v_{i}\frac{m}{n}}u_{k+1}^{-v_{i}\frac{m}{n}}=\varepsilon c_{2} 2^{-\ell_{k+1}v_{i}\frac{m}{n}}\leq \varepsilon c_{2} 2^{-(\ell_{k}+1)v_{i}\frac{m}{n}} =2^{-v_{i}\frac{m}{n}}\rho_{i}(u_{k})\leq 2^{-\frac{m}{n}\min_{i}v_{i}}\rho_{i}(u_{k})
\end{equation*}
and so $\rho$ is $2^{-\frac{m}{n}\min_{i}v_{i}}$-regular. Thus, conditions $(I)-(III)$ of Theorem~\ref{KW ambient measure} are satisfied, and so 
\begin{equation*}
    \lambda_{n}(W_{A}(\Psi))=1 \quad \text{\rm if } \sum_{k=1}^{\infty}\prod_{i=1}^{n}\left(\frac{\psi_{i}(u_{k})}{\rho_{i}(u_{k})} \right) =\infty.
\end{equation*}
Lastly, observe that
\begin{align*}
    \sum_{k=1}^{\infty}2^{m\ell_{k}}\prod_{i=1}^{n}\psi_{i}(2^{\ell_{k}})& =\varepsilon^{n}c_{2}^{n}\sum_{k=1}^{\infty}\varepsilon^{-n}c_{2}^{-(n+m)}(c_{2}2^{\ell_{k}})^{m}\prod_{i=1}^{n}\psi_{i}(2^{\ell_{k}})\\
    &=\varepsilon^{n}c_{2}^{n}\sum_{k=1}^{\infty}\prod_{i=1}^{n}\frac{\psi_{i}(2^{\ell_{k}})}{\rho_{i}(u_{k})}\\
    & \geq \varepsilon^{n}c_{2}^{n}\sum_{k=1}^{\infty}\prod_{i=1}^{n}\frac{\psi_{i}(u_{k})}{\rho_{i}(u_{k})}
\end{align*}
 where the last line follows by the monotonicity of $\psi_{i}$ and that $c_{2}\geq 1$. Hence
 \begin{equation*}
     \sum_{k=1}^{\infty}2^{m\ell_{k}}\prod_{i=1}^{n}\psi_{i}(2^{\ell_{k}}) = \infty
 \end{equation*}
is sufficient to prove full measure of $W_{A}(\Psi)$. \par
For the convergence case of Corollary~\ref{bad_corollary}, apply the Borel-Cantelli Lemma to see that
\begin{equation*}
    \lambda_{n}(W_{A}(\Psi))=0 \quad \text{\rm if } \quad \sum_{r\in\N} \sum_{2^{r} < |\bq|_{\alpha}\leq 2^{r+1}} \lambda_{n}\left(\Delta(A\bq, \Psi(|\bq|_{\alpha}))\right)<\infty.
\end{equation*}
Observe that
\begin{align*}
    \sum_{r\in\N} \sum_{2^{r} < |\bq|_{\alpha}\leq 2^{r+1}} \lambda_{n}\left(\Delta(\{A\bq\}, \Psi(|\bq|_{\alpha}))\right)
    &\leq \sum_{r\in\N} c2^{rm} 2^{n}\prod_{i=1}^{n}\psi_{i}(2^{r})\, ,\\
    & \leq c 2^{n+1} \sum_{r=1}^{\infty}r^{m-1}\prod_{i=1}^{n}\psi_{i}(r),
\end{align*}
for constant $c=m2^{m+1}3^{m-1}\left(1-2^{-\min_{i} \alpha_{i}}\right)$ independent of $r$, and so the convergence summation condition of Corollary~\ref{bad_corollary} implies zero measure.

\subsection{Proof of Theorem~\ref{dim result}-\ref{dim result 2}}
For the upper bound consider the standard cover
\begin{equation*}
    \bigcup_{\bq\in\Z^{m}\backslash\{0\}: |\bq|_{\alpha}\geq N}\Delta\left(\{A\bq\}, \Psi(\bq)\right)
\end{equation*}
of $W_{A}(\Psi)$. Observe that each rectangle $\Delta\left(\{A\bq\}, \Psi(\bq)\right)$ can be covered by
\begin{equation*}
    \prod_{i=1 }^{n}\max\left\{1, \frac{|\bq|_{\alpha}^{-\tau_{i}}}{|\bq|_{\alpha}^{-\tau_{j}}}\right\}
\end{equation*}
balls of radius $|\bq|_{\alpha}^{-\tau_{j}}$.
Thus
\begin{align*}
    \cH^{s}(W_{A}(\Psi)) & \leq \sum_{\bq \in \Z^{m}:|\bq|_{\alpha}\geq N} \prod_{i=1 }^{n}\max\left\{1, \frac{|\bq|_{\alpha}^{-\tau_{i}}}{|\bq|_{\alpha}^{-\tau_{j}}}\right\} |\bq|_{\alpha}^{-s\tau_{j}}\, , \\
    & = \sum_{r\in\N_{\geq N}} \, \sum_{\bq \in \Z^{m} : |\bq|_{\alpha}=r} r^{\sum_{i:\tau_{j}>\tau_{i}}(\tau_{j}-\tau_{i})-s\tau_{j}}\, , \\
    & \leq 2^{m} \sum_{r\in\N_{\geq N}} \,  r^{m-1+\sum_{i:\tau_{j}>\tau_{i}}(\tau_{j}-\tau_{i})-s\tau_{j}} \to 0
\end{align*}
as $N \to \infty$ for any 
\begin{equation*}
s>\frac{m+\sum_{i:\tau_{j}>\tau_{i}}(\tau_{j}-\tau_{i})}{\tau_{j}}.    
\end{equation*}
This argument is true for any choice of $1\leq j \leq n$, hence 
\begin{equation*}
    \dimh W_{A}(\Psi) \leq \min_{1\leq i \leq n}\left\{  \frac{m+\sum_{i:\tau_{j}>\tau_{i}}(\tau_{j}-\tau_{i})}{\tau_{j}}\right\},
\end{equation*}
completing the upper bound result. \par
For the lower bound dimension result we appeal to Theorem~\ref{MTPRR}.
Given the setup provided in Theorem~\ref{MTPRR} we have that
\begin{equation*}
    a_{i}=\frac{m}{n}\, , \quad t_{i}=\tau_{i}-\frac{m}{n} \quad (1\leq i \leq n)\, , \quad \rho(r)=\left(\varepsilon c_{2}^{1+\frac{m}{n}}\right)^{\frac{n}{m}}r^{-1}\, .
\end{equation*}
For ease of notation, let
\begin{equation*}
    d(A)=\sum_{j \in \cK_{1}} 1+ \sum_{j \in \cK_{2}} 1+ \frac{\sum_{j \in \cK_{3}}a_{j}-\sum_{j \in \cK_{2}}t_{j}}{A} 
\end{equation*}
with each set $\cK_{1},\cK_{2}$ and $\cK_{3}$ defined by $A$.
Consider the two cases:
\begin{itemize}
    \item $A=\frac{m}{n}$: Then the sets appearing in Theorem~\ref{MTPRR} are
    \begin{equation*}
        \cK_{1}=\{ 1,\dots, n\}\, ,\quad \cK_{2}=\emptyset\, , \quad \cK_{3}=\emptyset\,
    \end{equation*}
    and so $d(A)=n$, giving us a trivial lower bound.
    \item $A=\tau_{j}$ for some $1\leq j \leq n$: Then
    \begin{equation*}
        \cK_{1}=\emptyset\, ,\quad \cK_{2}=\{ i: \tau_{i}\leq \tau_{j}\}\, , \quad \cK_{3}=\{i: \tau_{i}>\tau_{j}\}=\{1,\dots,n\}\backslash \cK_{2}\,
    \end{equation*}
    and so
    \begin{align*}
        d(A)&=\#\cK_{2}+\frac{\frac{m}{n}(n-\#\cK_{2})-\sum_{i: \tau_{i}\leq \tau_{j}}\left(\tau_{i}-\frac{m}{n}\right)}{A} \\
        &=\frac{m+ (A-\frac{m}{n})\#\cK_{2} -\sum_{i\in\cK_{2}}\left(\tau_{i}-\frac{m}{n}\right)}{A} \\
        &=\frac{m+\sum_{i\in\cK_{2}}\left(A-\tau_{i}\right)}{A}\, . \\
    \end{align*}
    Hence, replacing $A$ by some $\tau_{j}$ and inputting the definition of $\cK_{2}$ we have that
    \begin{equation*}
        d(\tau_{j})=\frac{m+\sum_{i:\tau_{i}\leq \tau_{j}}\left(\tau_{j}-\tau_{i}\right)}{\tau_{j}}
    \end{equation*}
\end{itemize}
Taking the minimum over all possible choices of $1\leq j \leq n$ we have the desired lower bound. The measure result is simply due to the second part of Theorem~\ref{MTPRR} and the observation that $s<n$.

For the lower bound of Theorem~\ref{dim result 2} let
\begin{equation*}
    a_{i}=v_{i}\frac{m}{n}\, , \quad t_{i}=\tau_{i}-v_{i}\frac{m}{n} \quad (i=1,2) \, \quad \rho(r)=\left(\varepsilon c_{2}^{1+\frac{m}{n}}\right)^{\frac{n}{m}}r^{-1}\, .
\end{equation*}
Without loss of generality we assume $v_{1}<v_{2}$.
\begin{itemize}
    \item $v_{1}\frac{m}{n}<v_{2}\frac{m}{n}<\tau_{1}<\tau_{2}$: Consider the sets $\cK_{1},\cK_{2}$ and $\cK_{3}$, and the corresponding dimension bound for each of the following
    \begin{itemize}
        \item $A=v_{1}\frac{m}{n}$:
        \begin{equation*}
            \cK_{1}=\{ 1,2\}\, ,\quad \cK_{2}=\emptyset\, , \quad \cK_{3}=\emptyset\, .
        \end{equation*}
        So 
        \begin{equation*}
        d(A)= 2.
        \end{equation*}
\item $A=v_{2}\frac{m}{n}$:
        \begin{equation*}
            \cK_{1}=\{ 2\}\, ,\quad \cK_{2}=\emptyset\, , \quad \cK_{3}=\{1\}\, .
        \end{equation*}
        So 
        \begin{equation*}
        d(A)= 1+\frac{v_{1}\frac{m}{n}}{v_{2}\frac{m}{n}}=1+\frac{v_{1}}{v_{2}}.
        \end{equation*}
        \item $A=\tau_{1}$:
        \begin{equation*}
            \cK_{1}=\emptyset\, ,\quad \cK_{2}=\{ 1\}\, , \quad \cK_{3}=\{ 2\}\, .
        \end{equation*}
        So 
        \begin{equation*}
        d(A)=1+\frac{v_{1}\frac{m}{n}+v_{2}\frac{m}{n}-\tau_{1}}{\tau_{1}}=1+\frac{m-\tau_{1}}{\tau_{1}}.
        \end{equation*}

        \item $A=\tau_{2}$:
        \begin{equation*}
            \cK_{1}=\emptyset\, ,\quad \cK_{2}=\{ 1,2\}\, , \quad \cK_{3}=\emptyset\, .
        \end{equation*}
        So 
        \begin{equation*}
        d(A)=2-\frac{\left(\tau_{1}-v_{1}\frac{m}{n}\right)+\left(\tau_{2}-v_{2}\frac{m}{n}\right)}{\tau_{2}}=1+\frac{m-\tau_{1}}{\tau_{2}}.
        \end{equation*}
    \end{itemize}
     Since $\tau_{2}>\tau_{1}$ we have, in this case, that
    \begin{equation*}
        \min d(A)= 1+\min\left\{ \frac{m-\tau_{1}}{\tau_{2}}, \frac{v_{1}}{v_{2}}\right\}.
    \end{equation*}

    \item $v_{1}\frac{m}{n}<v_{2}\frac{m}{n}<\tau_{2}<\tau_{1}$: This is similar to the previous case with $\tau_{1}$ and $\tau_{2}$ switching roles. In particular we get
    \begin{equation*}
        \min d(A)= 1+\min\left\{\frac{v_{1}}{v_{2}}, \frac{m-\tau_{2}}{\tau_{1}} \right\}.
    \end{equation*}
 \end{itemize}
 Combining these cases together, and using the condition that
 \begin{equation*}
     \frac{v_{1}}{v_{2}}\geq\frac{m-\min\{\tau_{1},\tau_{2}\}}{\max\{\tau_{1},\tau_{2}\}}
 \end{equation*}
   we obtain our lower bound. Lastly, consider the following case:
\begin{itemize}
    \item $v_{1}\frac{m}{n}<\tau_{1}<v_{2}\frac{m}{n}<\tau_{2}$: Consider the sets $\cK_{1},\cK_{2}$ and $\cK_{3}$, and the corresponding dimension bound for each of the following
    \begin{itemize}
        \item $A=v_{1}\frac{m}{n}$:
        \begin{equation*}
            \cK_{1}=\{1,2 \}\, ,\quad \cK_{2}=\emptyset\, , \quad \cK_{3}=\emptyset\, .
        \end{equation*}
        So
        \begin{equation*}
            d(A)=2.
        \end{equation*}

        \item $A=v_{2}\frac{m}{n}$:
        \begin{equation*}
            \cK_{1}=\{2 \}\, ,\quad \cK_{2}=\{1\}\, , \quad \cK_{3}=\emptyset\, .
        \end{equation*}
        So
        \begin{equation*}
            d(A)=2-\frac{\tau_{1}-v_{1}\frac{m}{n}}{v_{2}\frac{m}{n}}=1+\frac{v_{1}\frac{m}{n}+v_{2}\frac{m}{n}-\tau_{1}}{v_{2}\frac{m}{n}}=1+\frac{m-\tau_{1}}{v_{2}\frac{m}{n}}.
        \end{equation*}

        \item $A=\tau_{1}$:
        \begin{equation*}
            \cK_{1}=\{ 2\}\, ,\quad \cK_{2}=\{ 1\}\, , \quad \cK_{3}=\emptyset\, .
        \end{equation*}
        So,
        \begin{equation*}
            d(A)=2-\frac{\tau_{1}-v_{1}\frac{m}{n}}{\tau_{1}}=1+\frac{v_{1}\frac{m}{n}}{\tau_{1}}.
        \end{equation*}

        \item $A=\tau_{2}$:
        \begin{equation*}
            \cK_{1}=\emptyset\, ,\quad \cK_{2}=\{ 1,2\}\, , \quad \cK_{3}=\emptyset\, .
        \end{equation*}
        So
        \begin{equation*}
            d(A)=2-\frac{\left(\tau_{1}-v_{1}\frac{m}{n}\right)+\left(\tau_{2}-v_{2}\frac{m}{n}\right)}{\tau_{2}}=1+\frac{m-\tau_{1}}{\tau_{2}}.
        \end{equation*}
    \end{itemize}
    In this case, noting $\tau_{2}>v_{2}\frac{m}{n}$, we have that
    \begin{equation*}
        \min d(A)= 1+\min\left\{ \frac{v_{1}\frac{m}{n}}{\tau_{1}}, \frac{m-\tau_{1}}{\tau_{2}}\right\}.
    \end{equation*}
\end{itemize}
Using \eqref{annoying condition}, this minimum again becomes our desired lower bound.

\bibliographystyle{plain}
\bibliography{bibliography4.bib}

\end{document}